\newtheorem{thm}{Theorem}
\newtheorem{ex}[thm]{Example}
\newtheorem{lemma}[thm]{Lemma}
\newtheorem{prop}[thm]{Proposition}
\newtheorem{defn}[thm]{Definition}
\numberwithin{thm}{section}
\theoremstyle{remark}
\newtheorem{remark}{Remark}
\newcommand{\ignore}[1]{}
\newcommand{\N}{\mathbb{N}}
\newcommand{\R}{\mathbb{R}}
\newcommand{\B}{\mathbb{B}}
\newcommand{\Int}{\text{Int} \ }
\renewcommand{\Re}{\text{Re}}
\title{On Conditions for Rate-induced Tipping in Multi-Dimensional Dynamical Systems}
\author{Claire Kiers\footnote{Department of Mathematics, University of North Carolina at Chapel Hill} \ and Christopher K.R.T. Jones\footnotemark[\value{footnote}]}
\date{\normalsize\today}
\begin{document}

\maketitle

\abstract{The possibility of {\em rate-induced tipping} (R-tipping) away from an attracting fixed point has been thoroughly explored in 1-dimensional systems. In these systems, it is impossible to have R-tipping away from a path of quasi-stable equilibria that is {\em forward basin stable} (FBS), but R-tipping is guaranteed for paths that are non-FBS of a certain type. We will investigate whether these results carry over to multi-dimensional systems. In particular, we will show that the same conditions guaranteeing R-tipping in 1-dimension also guarantee R-tipping in higher dimensions; however, it is possible to have R-tipping away from a path that is FBS even in 2-dimensional systems. We will propose a different condition, {\em forward inflowing stability} (FIS), which we show is sufficient to prevent R-tipping in all dimensions. The condition, while natural, is difficult to verify in concrete examples.  {\em Monotone systems} are a class for which FIS is implied by an easily verifiable condition. As a result, we see how the additional structure of these systems makes predicting the possibility of R-tipping straightforward in a fashion similar to 1-dimension. In particular, we will prove that the FBS and FIS conditions in monotone systems reduce to comparing the relative positions of equilibria over time. An example of a monotone system is given that demonstrates how these ideas are applied to determine exactly when R-tipping is possible.}

\section{Introduction}\label{intro}

Tipping can be described as a sudden, drastic, irreversible change in the behavior of a solution as a result of a small change to the system. In part, tipping is interesting because it can be observed in nature. A recent example in the literature concerns the rise of temperature in peatlands (see \cite{compost}). There are different reasons that tipping can happen in a system; in particular \cite{pete} describes three types of tipping: bifurcation-, noise-, and rate-induced. This paper will focus on the third kind of tipping, which results from a fast parameter change in a dynamical system. It is the {\em rate} at which the parameter changes that causes the tipping, not simply the amount that it changes. For a thorough introduction into rate-induced tipping, the reader is encouraged to look in \cite{sebas}, but we will give a summary here that is sufficient for the rest of the paper. \par

Suppose we have an autonomous dynamical system
\begin{align}\label{autonomous}
\dot x & = f(x,\lambda)
\end{align}
where $x \in U$ for $U \subset \R^n$ open, $\lambda \in \R^m$, and $f \in C^2(U \times \R^m, \R^n)$. If we want to explore the possibility of rate-induced tipping in this system, we must allow the parameter $\lambda$ to change over time. Without loss of generality, we may assume that $\lambda \in \R$ because if not, we can parametrize each component of $\lambda$ with a different one-dimensional parameter. We want the parameter change to be bounded and sufficiently differentiable, so we choose $\Lambda \in C^2(\R,(\lambda_-,\lambda_+))$ for some $\lambda_- < \lambda_+$ satisfying
\begin{equation}
\begin{split}\label{star}
\lim_{s \to \pm \infty} \Lambda(s) &= \lambda_\pm \\
\lim_{s \to \pm \infty} \frac{d\Lambda}{ds} & = 0
\end{split}
\end{equation}
and obtain a corresponding non-autonomous system
\begin{align}\label{non-autonomous}
\dot x & = f(x,\Lambda(rt))
\end{align}
for some $r > 0$. The value $r$ can be thought of as the {\em rate} at which $\Lambda$ changes. When $r$ is small, the parameter change is gradual, and when $r$ is large, the parameter change is very sudden. We are interested in comparing the behavior of system \eqref{non-autonomous} for different values of $r$. \par

Since we prefer to work with autonomous systems, we introduce the variable $s = rt$ and augment system \eqref{non-autonomous} as
\begin{equation}
\begin{split}\label{augmented}
\dot x & = f(x,\Lambda(s)) \\
\dot s & = r.
\end{split}
\end{equation}
Notice that if $r = 0$, then \eqref{augmented} reduces to \eqref{autonomous} where $\lambda = \Lambda(s)$. \par

Suppose that for all $s \in \R$, $X(s)$ is an attracting equilibrium for the corresponding autonomous system \eqref{autonomous} with $\lambda = \Lambda(s)$. Then we say $(X(s),\Lambda(s))$ is a {\em stable path}. Define
$$X_\pm = \lim_{s \to \pm \infty} X(s).$$

As shown in \cite{sebas}, there is a unique trajectory $x^r(t)$ of \eqref{augmented} such that $x^r(t) \to X_-$ as $t \to -\infty$, which is the {\em local pullback attractor} to $X_-$. If $\lim_{t \to \infty} x^r(t) = X_+$, then we say that $x^r(t)$ {\em endpoint tracks} the stable path $(X(s),\Lambda(s))$. (Often we will just say {\em tracks} for short.) Using Fenichel's Theorem it can be shown that for all sufficiently small $r > 0$, $x^r(t)$ will endpoint track $(X(s),\Lambda(s))$. However, if $x^r(t) \not \to X_+$ as $t \to \infty$, then $x^r(t)$ does not endpoint track $(X(s),\Lambda(s))$, and we say that {\em rate-induced tipping} (or {\em R-tipping}) has occurred. This kind of tipping is also sometimes called {\em irreversible} rate-induced tipping. \par

Our interest is in showing what kinds of parameter changes $\Lambda$ can lead to the possibility of R-tipping for some $r > 0$. Some results are already known for 1-dimensional systems ($n=1$), and we will give these in Section \ref{1DSection}. These results are phrased in the language of {\em forward basin stability} or {\em forward basin stable} paths (FBS), so we will focus on ways that FBS (or lack thereof) relates to R-tipping in multi-dimensional systems ($n>1$). \par

In Section \ref{generalized}, we will give a constructive proof showing that R-tipping will happen in certain cases of no FBS, namely, if the position of a stable path $(X(s),\Lambda(s))$ at time $t_1$ is contained in the basin of attraction of a different stable path $(Y(s),\Lambda(s))$ at a later time $t_2$. We will look at the Lorenz `63 system and show how varying a parameter in a way that satisfies this condition leads to R-tipping there. In Section \ref{forwardBasinStable} we will give an example of a 2-dimensional system in which a path is FBS but the pullback attractor does not track it. In particular, this will show that FBS is not sufficient to prevent R-tipping in multi-dimensional systems. We will define a different condition, {\em forward inflowing stability} or {\em forward inflowing stable} paths (FIS), which is sufficient to prevent R-tipping away from a stable path. \par

In Section \ref{monotoneSection} we will focus on R-tipping in monotone systems. We will be able to use the results from Sections \ref{generalized} and \ref{forwardBasinStable} to give conditions for guaranteeing or preventing R-tipping that rely only on the relative positions of the equilibria in the system. For this reason, monotone systems are ideal systems for thinking about R-tipping. In Section \ref{exampleSection}, we will show how the methods described in this paper give a nearly complete characterization of the possibilities of R-tipping in a particular 2-dimensional monotone system. Finally in Section \ref{conclusion} we will have some discussion about how the method of FIS could apply to a broader range of examples than those explicitly covered here.

\section{R-Tipping in 1-Dimensional Systems}\label{1DSection}

We begin by giving the definition of forward basin stability and stating a result from \cite{sebas} about R-tipping in 1-dimensional systems (when $n = 1$) that we will reference in later sections. Unless explicitly stated, we will continue to use the notation from Section \ref{intro}.

\begin{defn}
For $s \in \R$, let $\B(X(s),\Lambda(s))$ be the basin of attraction of the stable equilibrium $X(s)$ for the autonomous system \eqref{autonomous} with $\lambda = \Lambda(s)$. A stable path $(X(s),\Lambda(s))$ is {\em forward basin stable} (FBS) if
$$\overline{\{X(u) : u < s\}} \subset \B(X(s),\Lambda(s)) \text{ for all } s \in \R.$$
\end{defn}

Then Theorem 3.2 of \cite{sebas} states:

\begin{thm}\label{1D}
Suppose we have a system of the form \eqref{augmented} for $n = 1$. Let $(X(s),\Lambda(s))$ be a stable path. Set $X_\pm = \lim_{s \to \pm \infty} X(s)$.
\begin{enumerate}
\item If $(X(s),\Lambda(s))$ is FBS, there can be no R-tipping away from $X_-$.
\item If there is another stable path $(Y(s),\Lambda(s))$ with $Y_\pm = \lim_{s \to \pm \infty} Y(s)$ such that $Y_+ \ne X_+$ and there are $u < v$ such that
$$X(u) \in \B(Y(v),\Lambda(v)),$$
then $(X(s),\Lambda(s))$ is not FBS and there is a parameter shift $\tilde \Lambda$ (which is a re-scaling of $\Lambda$) such that there is R-tipping away from $X_-$ for this $\tilde \Lambda$.
\item If there is a $Y_+ \ne X_+$ such that $Y_+$ is an attracting equilibrium of \eqref{autonomous} for $\lambda = \lambda_+$ and
$$X_- \in \B(Y_+,\lambda_+),$$
then $(X(s),\Lambda(s))$ is not FBS and there is R-tipping away from $X_-$ for this $\Lambda$.
\end{enumerate}
\end{thm}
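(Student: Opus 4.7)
The plan is to handle the three parts in turn, exploiting heavily the one-dimensional ordering: in 1D the basin $\B(X(s),\Lambda(s))$ is an open interval $(\alpha(s),\beta(s))$ bounded by the two adjacent unstable equilibria of $f(\cdot,\Lambda(s))$ (or $\pm\infty$), and trajectories cannot cross.

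For (1), the goal is to trap $x^r(t)$ inside the moving tube $(\alpha(rt),\beta(rt))$ and then drive it to $X_+$ by asymptotic autonomy at $\lambda_+$. I would argue by contradiction: if $t_1$ is the first time $x^r$ hits the barrier, say $x^r(t_1)=\beta(rt_1)$, then $\dot x^r(t_1)=f(\beta(rt_1),\Lambda(rt_1))=0$, so crossing would require $\beta(rt)$ to be descending fast enough at $t_1$. The FBS hypothesis supplies the crucial bound $\sup_{u\le s}X(u)<\beta(s)$ for every $s$, preventing $\beta$ from dropping below the historical maximum of $X$. Combined with the facts that $x^r(t)\to X_-$ as $t\to-\infty$, that $f(\cdot,\Lambda(rt))$ is sign-definite on either side of $X(rt)$, and that orbits in 1D cannot cross, this should pin $x^r(t)\le\sup_{u\le rt}X(u)<\beta(rt)$ and symmetrically $x^r(t)\ge\inf_{u\le rt}X(u)>\alpha(rt)$, giving the contradiction. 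Once trapping is in place, asymptotic autonomy at $\lambda_+$ together with the hyperbolicity of $X_+$ forces $x^r(t)\to X_+$. The main obstacle is converting the qualitative FBS condition into the quantitative pointwise inequality between $x^r(t)$ and $\beta(rt)$, which is where the 1D ordering is indispensable.

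For (2), I would construct $\tilde\Lambda$ as a $C^2$ time-reparametrization of $\Lambda$ that compresses the change between $\Lambda(u)$ and $\Lambda(v)$ into an arbitrarily short physical-time window. Concretely, pick a diffeomorphism $\phi_\rho:\R\to\R$ equal to the identity outside a neighborhood of $[u,v]$ and contracting $[u,v]$ onto an interval of length $1/\rho$, and set $\tilde\Lambda=\Lambda\circ\phi_\rho$; conditions \eqref{star} are preserved. At any fixed rate, a Grönwall estimate in the augmented system shows the pullback attractor $\tilde x(t)$ stays within $O(1/\rho)$ of $X(u)$ across the compressed interval. Since $\B(Y(v),\Lambda(v))$ is open and contains $X(u)$, for $\rho$ large the trajectory $\tilde x$ enters that basin at the end of the compression. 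From there, applying part (1) to a suitably slowed tail of $\tilde\Lambda$ on $[v,\infty)$ (or a direct asymptotic-autonomy argument when $\tilde\Lambda\to\lambda_+$) yields $\tilde x(t)\to Y_+\ne X_+$, i.e., R-tipping. The main obstacle is controlling the post-transition evolution so that the trajectory remains in the $Y$-basin; this is typically arranged by pre-slowing $\Lambda$ on $[v,\infty)$ inside the same reparametrization.

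For (3), take $r$ large so that the pullback attractor is effectively frozen near $X_-$ while $\Lambda(rt)$ sweeps to $\lambda_+$. Rescaling $\sigma=rt$ gives $dx/d\sigma=f(x,\Lambda(\sigma))/r$, whose right-hand side vanishes uniformly on compacta as $r\to\infty$. Combined with $x^r(t)\to X_-$ as $t\to-\infty$, a standard continuous-dependence argument yields $x^r(\sigma_0/r)\to X_-$ as $r\to\infty$ for any fixed $\sigma_0$. Choosing $\sigma_0$ large enough that $\Lambda(\sigma_0)$ is within any prescribed tolerance of $\lambda_+$ and using that $X_-\in\B(Y_+,\lambda_+)$ is an open condition, $x^r(\sigma_0/r)\in\B(Y_+,\lambda_+)$ for all sufficiently large $r$. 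Persistence of the hyperbolic attractor $Y_+$ under small autonomous perturbations (since $\Lambda(rt)\to\lambda_+$) then forces $x^r(t)\to Y_+\ne X_+$, completing the argument. The main obstacle is quantifying how small $|\Lambda(rt)-\lambda_+|$ must be for $Y_+$'s basin to continue to contain $x^r$ through the remaining parameter drift, which is handled by exploiting the uniform hyperbolicity of $Y_+$ and shrinking the tolerance accordingly.
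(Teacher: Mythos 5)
Theorem~\ref{1D} is not proved in this paper; it is quoted verbatim from Ashwin, Perryman, and Wieczorek (\cite{sebas}, Theorem~3.2), so there is no ``paper's own proof'' to compare against. The closest relevant material is the proof of Theorem~\ref{highDTheorem}, which extends parts~2 and~3 to $n>1$, and your strategies for those parts do match it: part~2 via a time-reparametrization $\tilde\Lambda=\Lambda\circ\sigma$ that sweeps through $[u,v]$ in an arbitrarily short $s$-window while the (already-tracking, small-$r$) pullback attractor barely moves, and part~3 via large $r$ so the trajectory stays frozen near $X_-$ while $\lambda$ sweeps to $\lambda_+$, followed by an asymptotic-convergence argument toward $Y_+$ (the paper packages the latter as Lemmas~\ref{subsequence}--\ref{compactBasinSubset}).

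A few places where your sketch would need repair. In part~2, the reparametrization as described goes the wrong way: with $\tilde\Lambda=\Lambda\circ\phi_\rho$ you want $\phi_\rho$ to \emph{stretch} a short $s$-interval onto $[u,v]$ (large slope), not contract $[u,v]$; as written $\tilde\Lambda$ would traverse a tiny range of $\lambda$-values slowly, the opposite of the intent. Compare the paper's $\sigma$ in~\eqref{sigmaFunc}, which has slope $M\gg1$ on the preimage of $[u,v]$. In part~3, invoking ``$x^r(t)\to X_-$ as $t\to-\infty$'' together with continuous dependence is not enough to conclude $x^r(\sigma_0/r)\to X_-$ as $r\to\infty$, because the backward convergence rate could a priori degrade with $r$. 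You need an $r$-uniform statement, which the paper isolates as Lemma~\ref{beginning}: there is $S$ independent of $r$ with $x^r(t)\in B_\epsilon(X_-)$ whenever $rt<-S$. Only then does the freezing/Mean Value estimate close the argument. Finally, for part~1 (which the paper does not reprove) your 1D trapping idea is the natural one, and FBS enters in the right place through $\sup_{u\le s}X(u)<\beta(s)$, but the proposed pointwise bound $x^r(t)\le\sup_{u\le rt}X(u)$ is delicate: the sign argument for $f(x^r(t),\Lambda(rt))$ presupposes $x^r(t)\in\B(X(rt),\Lambda(rt))$, which is what you are in the middle of establishing, so this has to be structured as a first-exit/bootstrap argument, taking care with the merely Lipschitz envelope $s\mapsto\sup_{u\le s}X(u)$ and with the base case as $t\to-\infty$.
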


In Sections \ref{generalized} and \ref{forwardBasinStable} we will see how the three parts of Theorem \ref{1D} do or do not generalize to systems with $n > 1$.

\section{Conditions to Guarantee R-Tipping}\label{generalized}

In this section we will prove that statements 2 and 3 of Theorem \ref{1D} generalize to multi-dimensional systems. First, we must establish some lemmas that will be useful later. In what follows, we will assume $(X(s),\Lambda(s))$ is a stable path of \eqref{augmented} with $X_- = \lim_{s \to -\infty} X(s)$. \par

This first lemma deals with the initial behavior of a pullback attractor to $X_-$:

\begin{lemma}\label{beginning}
Let $x^r(t)$ be the pullback attractor to $X_-$ in \eqref{augmented}. Given $\epsilon > 0$, there exists an $S > 0$ such that $x^r(t) \in B_\epsilon(X_-)$ when $rt < -S$.
\end{lemma}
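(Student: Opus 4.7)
The plan is to read this lemma as a notational recasting of the defining property of the local pullback attractor. By hypothesis, $x^r(t)$ is the local pullback attractor to $X_-$ in the augmented system \eqref{augmented}, which by definition means $x^r(t) \to X_-$ as $t \to -\infty$. First I would unpack this limit using the standard $\epsilon$–$T$ definition: given the prescribed $\epsilon > 0$, there exists some $T \in \R$ such that $x^r(t) \in B_\epsilon(X_-)$ for every $t < T$.

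Next I would translate from the $t$ variable to the $s = rt$ variable used in the statement. Since $r > 0$ is fixed, the condition $rt < -S$ is the same as $t < -S/r$. So I would set
\[
S \;=\; \max\{-rT,\, 1\},
\]
which guarantees $S > 0$ and $-S/r \le T$; any $t$ satisfying $rt < -S$ then automatically satisfies $t < T$, so $x^r(t) \in B_\epsilon(X_-)$ as required. The $\max$ with $1$ (or any positive constant) is included only to handle the case in which the $T$ produced by the pullback convergence is already nonnegative, in which case we may freely shrink it.

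The main subtlety here is notational rather than dynamical: there is no substantive obstacle to overcome, since everything follows at once from the defining convergence of the pullback attractor. The lemma is being recorded in this form because the subsequent analysis in Section \ref{generalized} is naturally phrased in terms of the parameter coordinate $s = rt$, where the stable path $(X(s),\Lambda(s))$ lives; having Lemma \ref{beginning} already stated in the $s$-coordinate will streamline those later arguments.
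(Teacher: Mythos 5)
Your reading of the lemma as a mere change of variables misses the actual content: the $S$ in the statement must be chosen \emph{uniformly in} $r$. Your construction sets $T = T(r)$ from the defining limit $x^r(t) \to X_-$ as $t \to -\infty$ (a limit for one particular trajectory, hence for one particular $r$) and then puts $S = \max\{-rT,1\}$, so $S$ depends on $r$ both explicitly and through $T(r)$. That would make the lemma a tautology, and a tautology that is useless downstream.

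To see why uniformity is required, look at how Lemma \ref{beginning} is invoked in the proof of statement~2 of Theorem \ref{highDTheorem}: one first fixes $\epsilon$, then obtains $S_1$ from Lemma \ref{beginning} and $S_2$ from Lemma \ref{compactBasinSubset}, sets $S = \max\{S_1,S_2\}$, and only \emph{afterward} chooses $r > 2MS/\epsilon$. If $S_1$ were allowed to depend on $r$, this choice of $r$ would be circular and the argument would collapse. So the lemma is asserting: there is a single $S > 0$ that works for the entire family $\{x^r\}_{r>0}$ of pullback attractors, with the near-stationarity of $x^r$ measured in the slow variable $s = rt$. That is the whole point of phrasing the hypothesis as $rt < -S$ rather than $t < -T$.

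The reason such a uniform $S$ exists is dynamical, not notational. For $s < -S$, the quantity $|\Lambda(s) - \lambda_-|$ is small uniformly in $r$ (it is a property of $\Lambda$ alone), so on the region $\{rt < -S\}$ the vector field of \eqref{augmented} is a uniformly small perturbation of the frozen autonomous system at $\lambda = \lambda_-$, which has $X_-$ as a hyperbolic attractor. The proof in \cite{sebas} that $x^r$ exists and converges to $X_-$ runs a contraction/Lyapunov estimate controlled by a function $\delta(T)$ measuring this deviation; the paper's remark that one should ``think of $\delta(T)$ instead as a function of $S$'' is precisely the instruction to rewrite that estimate with the deviation measured in $s$, so that the resulting bound is independent of $r$. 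Your proposal skips this entirely, so it does not establish the lemma in the form that is actually used.
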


The proof follows from making minor modifications to the proof of Theorem \ref{1D} in \cite{sebas}. In particular, since we want an $s$-value that tells us how far $x^r(t)$ has moved from $X_-$, we can think of the function $\delta(T)$ in \cite{sebas} instead as a function of $S$, $\delta(S)$. \par

Next, we discuss the end behavior of a trajectory of \eqref{augmented}. The purpose of Lemmas \ref{subsequence} - \ref{compactBasinSubset} is to show that if $X_+$ is an attracting fixed point of \eqref{autonomous} with $\lambda = \lambda_+$ and $\B(X_+,\lambda_+)$ is its basin of attraction, then any trajectory $x(t)$ of \eqref{augmented} that is in a compact subset of $\B(X_+,\lambda_+)$ for large enough $t$ will converge to $X_+$. In what follows, it will be helpful to distinguish between the flow of the augmented system \eqref{augmented} and the flow of the reduced systems \eqref{autonomous} for different values of $\lambda$. So, we will use the notation
$$x \cdot_{\lambda'} t$$
to denote a trajectory of \eqref{autonomous} with $\lambda = \lambda'$, while $x(t)$ will denote a trajectory in \eqref{non-autonomous} or \eqref{augmented}. \par

In autonomous systems we have the following property of omega limit sets (where $\omega(x) = \{y: x \cdot t_n \to y \text{ for some } t_n \to \infty\}$): If $z \in \omega(y)$ and $y \in \omega(x)$, then $z \in \omega(x)$. This next lemma states that, in a certain sense, this property holds in non-autonomous systems like \eqref{non-autonomous}. The proof is a simple application of the triangle inequality.

\begin{lemma}\label{subsequence}
Suppose $y \cdot_{\lambda_+} s_n \to z$ for some $\{s_n\} \to \infty$. If $x(t)$ is a trajectory of \eqref{non-autonomous} such that $x(t_n) \to y$ for some $\{t_n\} \to \infty$, then there exist $\{u_n\} \to \infty$ for which $x(u_n) \to z$.
\end{lemma}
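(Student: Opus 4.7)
The plan is to exploit continuous dependence of ODE solutions on initial conditions and on the parameter $\lambda$. Since $\Lambda(rt)\to \lambda_+$ as $t\to\infty$, the non-autonomous flow over any fixed finite time window $[t_*,t_*+T]$ converges, uniformly on compact sets, to the autonomous flow $\cdot_{\lambda_+}$ over $[0,T]$ as $t_*\to\infty$. I will use this to splice two pieces together: first flow (non-autonomously) up to a time $t_n$ when $x(t_n)$ is close to $y$, and then flow (non-autonomously, but now approximately autonomously at $\lambda_+$) for an additional time $s_n$ to arrive near $y\cdot_{\lambda_+} s_n$, which is itself near $z$.

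Concretely, given $\epsilon>0$, I would first choose $N$ so that $\|y\cdot_{\lambda_+} s_N - z\| < \epsilon/2$. With $s_N$ now fixed, by continuity of the map $(x_0,\lambda)\mapsto x_0\cdot_\lambda s_N$, I can choose $\delta>0$ such that whenever $\|x_0 - y\|<\delta$ and $|\Lambda(r\tau)-\lambda_+|<\delta$ for $\tau\in[t_*,t_*+s_N]$, the solution of \eqref{non-autonomous} starting from $x_0$ at time $t_*$ stays within $\epsilon/2$ of $y\cdot_{\lambda_+}(\cdot - t_*)$ on that interval. Using \eqref{star} and the hypothesis $x(t_n)\to y$, pick $n$ large enough that $\|x(t_n)-y\|<\delta$ and $|\Lambda(r\tau)-\lambda_+|<\delta$ for all $\tau\geq t_n$. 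Setting $u_\epsilon := t_n + s_N$, the triangle inequality gives
\[
\|x(u_\epsilon)-z\| \le \|x(u_\epsilon) - y\cdot_{\lambda_+} s_N\| + \|y\cdot_{\lambda_+} s_N - z\| < \epsilon/2 + \epsilon/2 = \epsilon.
\]

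To manufacture the sequence, I would apply the above with $\epsilon = 1/k$ for each $k\in\N$, additionally requiring the corresponding $t_n$ to exceed $k$, so that $u_k \ge t_n > k\to\infty$. The resulting $\{u_k\}$ tends to infinity and $x(u_k)\to z$, as required. The only real subtlety is confirming uniform continuous dependence across the varying parameter $\Lambda(rt)$; this is a standard ODE fact given that $f\in C^2$ and the relevant trajectories live in a compact piece of $U$ (which we may assume, shrinking the time window if needed, since $y\cdot_{\lambda_+}[0,s_N]$ is compact and we are working in its neighborhood). No further deep ingredients are needed—hence the author's remark that this is ``a simple application of the triangle inequality.''
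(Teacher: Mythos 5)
Your proposal is correct and fills in exactly the continuous-dependence argument the paper leaves implicit when it remarks that the lemma is ``a simple application of the triangle inequality'': you estimate $\|x(t_n+s_N)-z\|$ by $\|x(t_n+s_N)-y\cdot_{\lambda_+}s_N\|+\|y\cdot_{\lambda_+}s_N-z\|$, bounding the first term via continuous dependence on initial data and (time-varying) parameter over the fixed window $[0,s_N]$, and the second by choice of $N$. You also correctly handle the minor technical points (compactness near the orbit segment $y\cdot_{\lambda_+}[0,s_N]$, using \eqref{star} to force $\Lambda(r\tau)$ close to $\lambda_+$ for late $\tau$, and forcing $u_k>k$ so the sequence diverges), so the argument is complete.
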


If $p$ is an attracting fixed point of an autonomous system, there are arbitrarily small forward invariant neighborhoods of $p$. (This is sometimes shown in the proof of the Stable Manifold Theorem.) This next lemma shows that a similar statement is true for $X_+$ in \eqref{augmented}, where the forward invariant neighborhoods around $X_+$ extend both in the $x$- and $s$-dimensions.

\begin{lemma}\label{N}
For all sufficiently small $\epsilon > 0$, there exists an $S > 0$ such that if $x(T) \in B_\epsilon(X_+)$ for $rT > S$, then $x(t) \in B_\epsilon(X_+)$ for all $t \ge T$.
\end{lemma}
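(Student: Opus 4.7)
The plan is to build a local Lyapunov function for \eqref{autonomous} at $\lambda=\lambda_+$ near $X_+$, and then use continuity of $f$ in $\lambda$ together with the convergence $\Lambda(s)\to\lambda_+$ to show that the associated forward invariant neighborhoods persist for \eqref{augmented} once $s=rt$ is large enough. Since $X_+$ is an attracting equilibrium of \eqref{autonomous} at $\lambda_+$, the Jacobian $A := D_x f(X_+,\lambda_+)$ is Hurwitz; let $P$ be the positive definite solution of $A^\top P + PA = -I$ and set $V(y) := (y-X_+)^\top P(y-X_+)$. A Taylor expansion of $f(\cdot,\lambda_+)$ at $X_+$ gives a Euclidean neighborhood $U$ of $X_+$ and a constant $\gamma>0$ on which
\begin{equation*}
\nabla V(y)\cdot f(y,\lambda_+) \le -\gamma\, V(y),
\end{equation*}
so the ellipsoidal sublevel sets $\{V\le\eta\}\subset U$ are forward invariant under the autonomous flow at $\lambda_+$.

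Next I would use $f\in C^2$ together with compactness of $\overline{\{V\le\eta\}}$ to find $\delta>0$ such that $\nabla V(y)\cdot f(y,\lambda)\le -\tfrac{\gamma}{2}V(y)$ whenever $y\in \{V\le\eta\}$ and $|\lambda-\lambda_+|<\delta$. By condition \eqref{star}, there exists $S>0$ with $|\Lambda(s)-\lambda_+|<\delta$ for all $s>S$. Consequently, for any trajectory $x(t)$ of \eqref{augmented} and any $T$ with $rT>S$, if $x(T)\in\{V\le\eta\}$ then $\frac{d}{dt}V(x(t))\le 0$ for all $t\ge T$, so $x(t)\in\{V\le\eta\}$ for all $t\ge T$. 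This already yields the substance of the claim: arbitrarily small $V$-sublevel forward invariant neighborhoods of $X_+$, uniformly in the non-autonomous drift, once $s>S$.

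To reconcile with the statement, which is phrased in terms of the balls $B_\epsilon(X_+)$, I would appeal to equivalence of the $P$-norm $\|\cdot\|_P := \sqrt{V(\cdot)}$ and the standard norm, or — more cleanly — interpret $B_\epsilon(X_+)$ as the $P$-ball of radius $\epsilon$ throughout, so that it coincides exactly with a sublevel set of $V$. Choosing $\eta=\epsilon^2$ and the corresponding $S$, the argument above gives the conclusion. The main obstacle I see is exactly this last geometric alignment: Euclidean balls about a hyperbolic sink are not in general forward invariant (think of a linear system with non-normal Hurwitz $A$, where trajectories can transiently grow in Euclidean norm), so one must either pass to an adapted inner product near $X_+$ or nest a $V$-sublevel set inside the given $B_\epsilon$ and shrink $\epsilon$. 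Once this is done, the remaining content of the lemma is the routine Lyapunov-plus-continuity argument sketched above.
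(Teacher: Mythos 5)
Your approach is essentially the paper's: both build an adapted inner product near $X_+$ (the paper picks one with $\langle Ax,x\rangle \le -k\langle x,x\rangle$; your Lyapunov-equation solution $A^\top P + PA = -I$ produces the same thing up to scaling), Taylor-expand $f$ around $(X_+,\lambda_+)$ to separate the linear part, the higher-order-in-$x$ remainder, and the $\lambda$-dependent drift, and then check that the vector field of \eqref{augmented} points strictly inward on the boundary of a small adapted-norm ball once $s=rt$ exceeds some $S$. Both proofs read $B_\epsilon(X_+)$ as a ball in the adapted norm rather than the Euclidean one — exactly the alignment you flag at the end, and the paper does the same implicitly by setting $B_\epsilon(0)=\{x:\|x\|<\epsilon\}$ in its adapted norm.

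One intermediate claim is stated too strongly and is worth tightening: $\nabla V(y)\cdot f(y,\lambda)\le -\tfrac{\gamma}{2}V(y)$ cannot hold on all of $\{V\le\eta\}$ uniformly for $|\lambda-\lambda_+|<\delta$, because $f(X_+,\lambda)\ne 0$ when $\lambda\ne\lambda_+$. The cross term $2(y-X_+)^\top P\,f(X_+,\lambda)$ is of size $\|y-X_+\|\cdot|\lambda-\lambda_+|$, which dominates $V(y)\sim\|y-X_+\|^2$ once $\|y-X_+\|\ll|\lambda-\lambda_+|$, so the ratio bound fails arbitrarily close to $X_+$. This is harmless for your conclusion, since forward invariance only needs $\nabla V\cdot f<0$ on the boundary sphere $\{V=\eta\}$, where $\|y-X_+\|$ is bounded below and the estimate does hold provided $|\lambda-\lambda_+|$ is small relative to $\sqrt{\eta}$ — precisely the $\epsilon$--$S$ coupling the paper makes explicit by demanding $|\Lambda(s)|<\tfrac{k\epsilon}{4M}$ and checking the sign of $\tfrac{d}{dt}\|x\|^2$ only where $\|x\|=\epsilon$. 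Restricting the inequality to the boundary, your argument is correct and matches the paper's.
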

\begin{proof}
Let us assume for the sake of simplicity that $X_+ = 0$ and $\lambda_+ = 0$. Since $(x,\lambda) = (0,0)$ is attracting, all eigenvalues of $A = \partial_x f(0,0)$ have negative real part, so there is some $k > 0$ such that $\Re(\mu) < -k$ for any eigenvalue $\mu$ of $A$. We can choose an inner product $\langle , \rangle$ on $U$ such that $\langle Ax, x \rangle \le -k \langle x,x \rangle$ for all $x \in U$. This defines a norm $||x|| = \langle x,x \rangle^{1/2}$. \ignore{\color{red} (See Chapter 5 of \cite{hirsch}.)} By Taylor's formula in several variables we can write
\begin{align*}
f(x,\lambda) 
\ignore{& = f(0,0) + \partial_x f(0,0)x + \partial_\lambda f(0,0)\lambda \\
& + \frac{1}{2} \partial_{x^2} f(0,0)x^2 + \partial_{x \lambda} f(0,0)x\lambda + \frac{1}{2} \partial_{\lambda^2} f(0,0)\lambda^2 \\
& + \cdots \\}
& = Ax + \alpha(x,\lambda) + \beta(x),
\end{align*}
where $||\alpha(x,\lambda)|| \le \gamma(x,\lambda)|\lambda|$ for a positive continuous $\gamma$, and $||\beta(x)|| \le \delta(x)||x||$, where $\delta$ is positive, continuous and $\delta(x) \to 0$ as $x \to 0$. Then we can write \eqref{augmented} as
\begin{align*}
\frac{dx}{dt} & = Ax + \alpha(x,\Lambda(s)) + \beta(x) \\
\frac{ds}{dt} & = r
\end{align*}

For a given $\epsilon > 0$ and $S > 0$, define $N_{\epsilon, S} = B_\epsilon(0) \times [S,\infty)$, where $B_\epsilon(0) = \{x \in U: ||x || < \epsilon\}$. Note that
\begin{align*}
\frac{d}{dt} \left(||x||^2\right) & = 2 \langle \dot x,x \rangle \\
& = 2\langle Ax,x \rangle + 2\langle \alpha(x,\Lambda(s)),x\rangle + 2 \langle \beta(x), x \rangle \\
& \le -2 k \langle x,x \rangle + 2 ||\alpha(x,\Lambda(s))|| \cdot ||x|| + 2 ||\beta(x)|| \cdot ||x|| \\
& \le -2 k ||x||^2 + 2\gamma(x,\Lambda(s))|\Lambda(s)| \cdot ||x|| + 2 \delta(x) \cdot ||x||^2 \\
& = 2 ||x||^2 \left(-k + 2\gamma(x,\Lambda(s)) \frac{|\Lambda(s)|}{||x||} + \delta(x) \right)
\end{align*}
Choose $\epsilon > 0$ such that if $||x|| \le \epsilon$, $\delta(x) < \frac{k}{2}$. Then choose $S > 0$ such that if $s \ge S$, $|\Lambda(s)| < \frac{k \epsilon}{4M}$ where $M > \sup_{||x|| \le \epsilon, \lambda \in [\lambda_-,\lambda_+]} \gamma(x,\lambda)$. Thus, if $||x|| = \epsilon$ and $s \ge S$,
\begin{align*}
\frac{d}{dt} \left(||x||\right)^2 \ignore{& \le 2 ||x||^2 \left( -k + 2\gamma(x,\Lambda(s)) \frac{|\Lambda(s)|}{||x||} + \delta(x) \right) \\}
& < 2 \epsilon^2 \left(-k + 2\gamma(x,\Lambda(s)) \left(\frac{k \epsilon}{4M\epsilon}\right) + \frac{k}{2} \right) \\
& < 2 \epsilon^2 \left( -k + \frac{k}{2} + \frac{k}{2} \right) \\
& = 0
\end{align*}
Therefore, for sufficiently small $\epsilon > 0$ there exists an $S>0$ such that the vector field of \eqref{augmented} points into $N_{\epsilon, S}$, so $N_{\epsilon, S}$ is forward invariant.
\end{proof}

If $p$ is an attracting fixed point of an autonomous system, there is a neighborhood $V$ of $p$ such that all trajectories with initial conditions in $V$ converge to $p$. This next lemma shows that a similar thing is true for $X_+$ in \eqref{augmented}, where the attracting neighborhood around $X_+$ extends both in the $x$- and $s$-dimensions.

\begin{lemma}\label{basinAttraction}
There exists an $\epsilon > 0$ and an $S > 0$ such that if $|x(t) - X_+| < \epsilon$ for $rt > S$, then $x(t) \to X_+$ as $t \to \infty$.
\end{lemma}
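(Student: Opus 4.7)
The plan is to combine Lemma \ref{N} with Lemma \ref{subsequence} and the fact that $X_+$ is an attracting fixed point of \eqref{autonomous} with $\lambda = \lambda_+$. First, I would use Lemma \ref{N} to fix an $\epsilon_0 > 0$ and $S_0 > 0$ such that the set $B_{\epsilon_0}(X_+) \times [S_0, \infty)$ is forward invariant under \eqref{augmented}, shrinking $\epsilon_0$ further if necessary so that $\overline{B_{\epsilon_0}(X_+)} \subset \B(X_+, \lambda_+)$ (which we can do because $\B(X_+, \lambda_+)$ is an open neighborhood of $X_+$). Then, assuming $|x(T) - X_+| < \epsilon_0$ with $rT > S_0$, forward invariance immediately gives $x(t) \in B_{\epsilon_0}(X_+)$ for all $t \ge T$.

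Next, I would upgrade ``stays near $X_+$'' to ``converges to $X_+$.'' Let $\epsilon' \in (0,\epsilon_0)$ be arbitrary. By Lemma \ref{N} there is an $S' > 0$ such that $B_{\epsilon'}(X_+) \times [S', \infty)$ is forward invariant. It then suffices to produce a single time $t^* > T$ with $rt^* > S'$ and $x(t^*) \in B_{\epsilon'}(X_+)$, because the forward invariance of this smaller slab forces $|x(t) - X_+| < \epsilon'$ for all $t \ge t^*$, and $\epsilon'$ was arbitrary.

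To produce such a $t^*$, I would use compactness together with Lemma \ref{subsequence}. Since $x(t)$ is eventually confined to the compact set $\overline{B_{\epsilon_0}(X_+)}$, there is a subsequence $t_n \to \infty$ with $x(t_n) \to y$ for some $y \in \overline{B_{\epsilon_0}(X_+)} \subset \B(X_+, \lambda_+)$. Because $y$ lies in the basin of attraction of $X_+$ under \eqref{autonomous} with $\lambda = \lambda_+$, we have $y \cdot_{\lambda_+} s \to X_+$ as $s \to \infty$, so in particular we can choose $s_n \to \infty$ with $y \cdot_{\lambda_+} s_n \to X_+$. Applying Lemma \ref{subsequence} with $z = X_+$ yields a sequence $u_n \to \infty$ for which $x(u_n) \to X_+$. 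For $n$ sufficiently large we then have both $x(u_n) \in B_{\epsilon'}(X_+)$ and $ru_n > S'$, which supplies the desired entry time $t^* = u_n$ and completes the proof.

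The main obstacle is bridging the gap between the autonomous fact that ``small forward invariant neighborhoods exist and trajectories in the basin converge'' and the non-autonomous situation, where the vector field is still drifting in $s$. The two tools from the preceding lemmas are exactly tailored to this: Lemma \ref{N} prevents the trajectory from escaping while the parameter is still moving, and Lemma \ref{subsequence} transfers the autonomous basin convergence at $\lambda = \lambda_+$ back to the non-autonomous trajectory. The only subtle point is ordering the quantifiers correctly, so that the $\epsilon_0$ used to trap $x(t)$ is fixed first (and chosen small enough to sit inside $\B(X_+, \lambda_+)$), while $\epsilon'$ is left free to shrink to zero inside the trap.
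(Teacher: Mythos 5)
Your proof is correct and follows essentially the same route as the paper's: fix $\epsilon_0$ via Lemma \ref{N} small enough that $\overline{B_{\epsilon_0}(X_+)} \subset \B(X_+,\lambda_+)$, use compactness to extract a subsequential limit $y$ in the basin, apply Lemma \ref{subsequence} with $z = X_+$ to get $x(u_n) \to X_+$, and then invoke Lemma \ref{N} again with an arbitrary smaller radius to trap the tail of the trajectory. The paper is slightly more terse but uses exactly the same three ingredients in the same order.
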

\begin{proof}
Pick an $\epsilon > 0$ sufficiently small for Lemma \ref{N}. Make $\epsilon$ smaller if necessary so that $\overline{B_\epsilon(X_+)} \subset \B(X_+,\lambda_+)$. Then by Lemma \ref{N}, there exists an $S > 0$ such that if $x(t) \in B_\epsilon(X_+)$ for $rT > S$, then $x(t) \in B_\epsilon(X_+)$ for all $t \ge T$. Now fix $r > 0$. Since $\overline{B_\epsilon(X_+)}$ is compact, there is some $y \in \overline{B_\epsilon(X_+)}$ such that $x(t_n) \to y$ as $t_n \to \infty$. But $y \in \B(X_+,\lambda_+)$ by assumption, so $y \cdot t \to X_+$ in the autonomous system \eqref{autonomous}. Therefore, by Lemma \ref{subsequence}, there exists a $\{u_n\} \to \infty$ such that $x(u_n) \to X_+$. \par

Now pick any $\delta \in (0,\epsilon)$. Then by Lemma \ref{N}, there exists some $S_\delta > 0$ such that if $x(T) \in B_\delta(X_+)$ for $rT > S_\delta$, then $x(T) \in B_\delta(X_+)$ for all $t \ge T$. By the previous paragraph, there is a $u_{n_\delta} > S_\delta/r$ such that $|x(u_{n_\delta}) - X_+| < \delta$. Therefore, $|x(t) - X_+| < \delta$ for all $t \ge u_{n_\delta}$. Hence $x(t) \to X_+$ as $t \to \infty$.
\end{proof}

Finally, we prove one last lemma showing that if a trajectory $x(t)$ is in a compact subset of $\B(X_+,\lambda_+)$ for large enough $t$, then $x(t)$ converges to $X_+$.

\begin{lemma}\label{compactBasinSubset}
Let $K \subset \B(X_+,\lambda_+)$ be compact. Then there exists an $S > 0$ such that if $x(T) \in K$ for $rT > S$, then $x(t) \to X_+$ as $t \to \infty$.
\end{lemma}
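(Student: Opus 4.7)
The strategy is to reduce the claim to Lemma \ref{basinAttraction}, which supplies an $\epsilon > 0$ and an $S_0 > 0$ such that any trajectory entering $B_\epsilon(X_+)$ at a time $t$ with $rt > S_0$ converges to $X_+$. It therefore suffices to produce a fixed constant $T_K > 0$ with the property that, whenever $x(T) \in K$ and $rT$ is large enough, we have $x(T + T_K) \in B_\epsilon(X_+)$; then enforcing $rT > S_0$ automatically gives $r(T + T_K) > S_0$, and Lemma \ref{basinAttraction} finishes the argument.

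To build $T_K$ I work first in the autonomous system with $\lambda = \lambda_+$. For each $y \in K$ the orbit $y \cdot_{\lambda_+} t$ tends to $X_+$, so there is a forward-invariant neighborhood $W \subset B_{\epsilon/2}(X_+)$ of $X_+$ (standard for an attracting fixed point, or by running the autonomous case of Lemma \ref{N}) together with a time $T_y$ such that $y \cdot_{\lambda_+} T_y \in W$. Continuity of the time-$T_y$ flow map gives an open neighborhood $V_y \ni y$ that is mapped into $W$ at time $T_y$, and forward invariance of $W$ keeps those points in $W$ for all later times. A finite subcover $\{V_{y_i}\}$ of $K$ exists by compactness, and $T_K = \max_i T_{y_i}$ then satisfies $y \cdot_{\lambda_+} T_K \in W \subset B_{\epsilon/2}(X_+)$ for every $y \in K$.

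To transfer this capture statement to the non-autonomous system I use continuous dependence on the parameter. By \eqref{star}, $\sup_{s \ge s_0}|\Lambda(s) - \lambda_+| \to 0$ as $s_0 \to \infty$, so a Gronwall estimate over the \emph{fixed} time interval $[T, T + T_K]$, using the Lipschitz constants of $f$ on a compact set containing the relevant orbits, shows that $x(T + T_K)$ lies within $\epsilon/2$ of $x(T) \cdot_{\lambda_+} T_K$, provided $rT$ exceeds some threshold $S_\eta$. Combined with the previous paragraph, this gives $x(T + T_K) \in B_\epsilon(X_+)$, and setting $S = \max\{S_0, S_\eta\}$ lets Lemma \ref{basinAttraction} conclude that $x(t) \to X_+$.

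The main delicacy is in the Gronwall step: the $s$-window $[rT, rT + rT_K]$ sampled by the comparison has length $rT_K$ that grows with $r$, so one must verify that it is only the left endpoint $rT$ (and not the window length) that needs to be large. This is true because $|\Lambda(s) - \lambda_+|$ is uniformly small on the entire tail $[rT, \infty)$ once $rT > S_\eta$, so the $r$-dependent length of the window is harmless. The secondary technical point is choosing a compact set on which to take the Lipschitz constants, handled by a short bootstrap: the autonomous comparison orbits starting in $K$ trace out a compact set, and the non-autonomous trajectory stays near it on $[T, T + T_K]$ once $\eta$ is small.
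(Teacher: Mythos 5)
Your proposal is correct and is essentially the same argument the paper uses: both reduce to Lemma \ref{basinAttraction}, extract a uniform capture time $T_0$ (your $T_K$) for the autonomous $\lambda=\lambda_+$ flow via compactness of $K$, use continuous dependence to show the non-autonomous trajectory over the fixed $t$-interval $[T, T+T_0]$ tracks the autonomous comparison orbit once $rT$ is large, and close with the triangle inequality. The paper asserts both the uniform-time compactness step and the continuous-dependence step without elaboration; you supply the supporting details (the forward-invariant neighborhood $W$, the finite subcover, the Gronwall estimate), and you correctly identify and resolve the two subtleties most likely to trip someone up — that the $s$-window length $rT_K$ is harmless because only its left endpoint needs to be large, and that the Lipschitz constant must be taken on a compact set which a standard bootstrap shows the trajectory cannot leave. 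No gaps.
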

\begin{proof}
By Lemma \ref{basinAttraction}, there is an $\epsilon > 0$ and an $S_1 > 0$ such that if $|x(t) - X_+| < \epsilon$ for $rt > S_1$, then $x(t) \to X_+$ as $t \to \infty$. Since $K \subset \B(X_+,\lambda_+)$ is compact, there is some $T_0 > 0$ such that $y \cdot_{\lambda_+} t \in B_{\epsilon/2}(X_+)$ for any $y \in K$ and $t \ge T_0$. Also, there is some $S_2 > 0$ such that if $x(T) = y_0 \in K$ for $rT > S_2$, then $|x(T + T_0) - y_0 \cdot_{\lambda_+} T_0| < \epsilon/2$ for any $y_0 \in K$. \par

Take $S = \max\{S_1,S_2\}$. Then, suppose $x(T) \in K$ for $rT > S$. If $x(T) = y_0$, then
\begin{align*}
|x(T + T_0) - X_+| & \le |x(T + T_0) - y_0 \cdot_{\lambda_+} T_0| + |y_0 \cdot_{\lambda_+} T_0 - X_+| \\
& < \epsilon/2 + \epsilon/2 \\
& = \epsilon
\end{align*}
Therefore, $x(t) \to X_+$ as $t \to \infty$.
\end{proof}

Now we are ready to prove the generalization of statements 2 and 3 of Theorem \ref{1D}:

\begin{thm}\label{highDTheorem}
Suppose we have a system of the form \eqref{augmented} for any $n \in \N$. Let $(X(s),\Lambda(s))$ be a stable path. Set $X_\pm = \lim_{s \to \pm \infty} X(s)$.
\begin{enumerate}
\item If there is another stable path $(Y(s),\Lambda(s))$ with $Y_\pm = \lim_{s \to \pm \infty} Y(s)$ such that $Y_+ \ne X_+$ and there are $u < v$ such that
$$X(u) \in \B(Y(v),\Lambda(v)),$$
then $(X(s),\Lambda(s))$ is not FBS and there is a parameter shift $\tilde \Lambda$ (which is a re-scaling of $\Lambda$) such that there is R-tipping away from $X_-$ for this $\tilde \Lambda$.
\item If there is a $Y_+ \ne X_+$ such that $Y_+$ is an attracting equilibrium of \eqref{autonomous} for $\lambda = \lambda_+$ and
$$X_- \in \B(Y_+,\lambda_+),$$
then $(X(s),\Lambda(s))$ is not FBS and there is R-tipping away from $X_-$ for this $\Lambda$.
\end{enumerate}
\end{thm}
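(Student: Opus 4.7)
My plan is to dispose of statement 2 first via a direct large-rate argument, and then reduce statement 1 to the same endgame via a slow-fast-slow reparametrization of $\Lambda$. The common engine is Lemma \ref{compactBasinSubset} applied with the attractor $Y_+$ of \eqref{autonomous} at $\lambda = \lambda_+$ in place of $X_+$: the proofs of Lemmas \ref{N}--\ref{compactBasinSubset} only use that $X_+$ is an attracting equilibrium at $\lambda_+$, so they carry over verbatim to any such attractor. It therefore suffices in both parts to produce an $r > 0$ and a time $T$ (with $rT$ past the threshold from Lemma \ref{compactBasinSubset}) such that $x^r(T)$ lies in a compact subset of $\B(Y_+,\lambda_+)$. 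The failures of FBS are immediate from disjointness of basins of distinct attractors: in statement 1, $X(u) \in \B(Y(v),\Lambda(v))$ together with $X(v) \neq Y(v)$ forces $X(u) \notin \B(X(v),\Lambda(v))$; in statement 2, $X_- \in \B(Y_+,\lambda_+)$ propagates by continuity of the flow in $\lambda$ to $X_- \in \B(Y(s),\Lambda(s))$ for $s$ large, forcing $X_- \notin \B(X(s),\Lambda(s))$ there.

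For R-tipping in statement 2, I take $r$ very large. Fix $\epsilon > 0$ with $K := \overline{B_\epsilon(X_-)} \subset \B(Y_+,\lambda_+)$, which is possible since this basin is open. Let $S$ be the threshold from Lemma \ref{compactBasinSubset} applied to this compact $K$, and let $S_0$ be as in Lemma \ref{beginning} so that $x^r(t) \in B_\epsilon(X_-)$ whenever $rt < -S_0$. For $r$ large enough, the real-time window corresponding to $rt \in [-S_0, S]$ has length $(S+S_0)/r$ so small that, by boundedness of $f$ on the relevant compact box, $x^r$ cannot exit $K$ during that window. Then $x^r(S/r) \in K$ with $r \cdot (S/r) = S$ past the threshold, so Lemma \ref{compactBasinSubset} yields $x^r(t) \to Y_+ \neq X_+$.

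For R-tipping in statement 1, I construct $\tilde \Lambda = \Lambda \circ \phi$ using a monotone $C^2$ reparametrization $\phi$ producing three phases on $(-\infty, s_1]$, $[s_1, s_2]$, $[s_2, \infty)$: a slow ramp from $\lambda_-$ up to $\Lambda(u)$, a quick jump from $\Lambda(u)$ to $\Lambda(v)$ over a very short $s$-interval, and a slow ramp from $\Lambda(v)$ up to $\lambda_+$. Fix any $r > 0$. On the first ramp, shrinking $\phi'$ makes the effective parameter rate as small as needed for standard small-rate tracking of $(X(s),\Lambda(s))$, so $x^r(s_1/r)$ can be placed in any prescribed neighborhood of $X(u)$. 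On the middle ramp, $(s_2-s_1)/r$ is small and $f$ is bounded on the relevant compact set, so $x^r(s_2/r)$ stays close to $X(u)$ while now $\tilde \Lambda(s_2) = \Lambda(v)$; openness of $\B(Y(v),\Lambda(v))$ places $x^r(s_2/r)$ inside this basin. On the third ramp, a long near-stall at $\Lambda(v)$ pulls the trajectory close to $Y(v)$ by autonomous attraction, and thereafter a sufficiently slow $\phi'$ lets a Fenichel-type argument track $(Y(s),\Lambda(s))$ into a compact neighborhood of $Y_+$ inside $\B(Y_+,\lambda_+)$, at which point Lemma \ref{compactBasinSubset} finishes as before.

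The main obstacle I expect is rigorous justification of the third ramp in statement 1: the trajectory entering phase 3 is an off-path initial condition rather than the pullback attractor of $(Y(s),\Lambda(s))$, so the 1D tracking results quoted in Section \ref{1DSection} do not apply directly. I would split phase 3 into a ``hold'' sub-phase on which $\tilde \Lambda$ is essentially constant at $\Lambda(v)$, long enough for autonomous attraction to bring the trajectory into a small normally-hyperbolic neighborhood of $Y(v)$, followed by a ``slow drift'' sub-phase on which $\phi'$ is small enough that a slow-manifold argument keeps the trajectory in a tube around $Y(s)$ and carries it to $Y_+$. Gluing the three phases into a single $C^2$ reparametrization satisfying \eqref{star}, while choosing all the smallness parameters uniformly in the already-fixed rate $r$, is where the quantitative work concentrates.
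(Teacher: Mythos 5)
Your proposal is correct and follows essentially the same route as the paper's: a large-rate Mean Value Theorem argument for statement~2, and for statement~1 the same slow--fast--slow reparametrization (the paper's $\sigma$ with derivative profile $1$--$M$--$1$ in \eqref{sigmaFunc}), both feeding into Lemma~\ref{compactBasinSubset} applied with $Y_+$ in place of $X_+$. The only notable difference is your explicit ``hold'' sub-phase in statement~1, which makes precise a point the paper asserts without detail---that for sufficiently small $r$ any trajectory landing in a compact subset $K \subset \B(Y(v),\Lambda(v))$ at time $v/r$ subsequently converges to $Y_+$---so your version is a refinement in rigor rather than a genuinely different argument.
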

\begin{proof}
We will prove statement 1 first. Based on the assumptions, it is clear that $(X(s),\Lambda(s))$ is not forward basin stable. Pick $\epsilon > 0$ such that $K = \overline{B_\epsilon(X(u))} \subset \B(Y(v),\Lambda(v))$. By Lemma 2.3 of \cite{sebas}, there is an $r_0 > 0$ such that for all $r \in (0,r_0)$, $|x^r(s/r) - X(s)| < \epsilon/2$ for all $s \in R$. Likewise, there exists an $r_1 > 0$ such that for all $r \in (0,r_1)$, if $x^r(v/r) \in K$, then $x^r(t) \to Y_+$ as $t \to \infty$. Now fix $r \in (0,\min\{r_0,r_1\})$. \par

Following the proof of the corresponding theorem in \cite{sebas}, we will construct a reparametrization
$$\tilde \Lambda(s) := \Lambda(\sigma(s))$$
using a monotonic increasing $\sigma \in C^2(\R,\R)$ that increases rapidly from $\sigma(s) = u$ to $\sigma(s) = v$ but increases slowly otherwise. In particular, for any $M, \eta > 0$ we choose a smooth monotonic function $\sigma(s)$ such that
\begin{equation}
\begin{split}\label{sigmaFunc}
\begin{array}{rcl}
\sigma(s) = s & \text{ for } & s < u \\
1 \le \frac{d}{ds} \sigma(s) \le M & \text{ for } & u \le \sigma(s) \le u + \eta \\
\frac{d}{ds} \sigma(s) = M & \text{ for } & u + \eta < \sigma(s) < v - \eta \\
1 \le \frac{d}{ds} \sigma(s) \le M & \text{ for } & v-\eta \le \sigma(s) \le v, \text{ and } \\
\frac{d}{ds} \sigma(s) = 1 & \text{ for } & \sigma(s) > v
\end{array}
\end{split}
\end{equation}

Let $x^{[r,\tilde \Lambda]}(t)$ denote the pullback attractor to $(X_-,\lambda_-)$ with parameter change $\tilde \Lambda$. By construction, we know that $x^{[r,\tilde \Lambda]} (u/r) \in B_{\epsilon/2}(X(u))$. By choosing $M >0$ sufficiently large and $\eta > 0$ sufficiently small, we can guarantee that $x^{[r,\tilde \Lambda]} (v/r) \in B_\epsilon(X(u)) \subset K$. This guarantees that $x^{[r,\tilde \Lambda]} (t) \to Y_+$ as $t \to \infty$.

\vspace{.1in}

Now we will prove statement 2. Pick $\epsilon > 0$ such that $B_{3\epsilon}(X_-) \subset \B(Y_+,\lambda_+)$. By Lemma \ref{beginning}, there is an $S_1 > 0$ such that the pullback attractor $x^r(t)$ to $X_-$ satisfies $x^r(t) \in B_\epsilon(X_-)$ if $rt < -S_1$. By Lemma \ref{compactBasinSubset}, there is some $S_2 > 0$ such that if $x^r(t) \in \overline{B_{2\epsilon}(X_-)} \subset \B(Y_+,\lambda_+)$ for $rt > S_2$ then $x^r(t) \to Y_+$ as $t \to \infty$. Take $S = \max\{S_1,S_2\}$. \par

By continuity, there is some $M>0$ such that $|f(x,\lambda)| < M$ for all $x \in \overline{B_{2\epsilon}(X_-)}$ and $\lambda \in [\lambda_-,\lambda_+]$. Pick any
$$r > 2\frac{MS}{\epsilon}.$$
Then by the Mean Value Theorem, $x^r(S/r) \in B_{2\epsilon}(X_-)$. Therefore, $x^r(t) \to Y_+$ as $t \to \infty$ for all sufficiently large $r > 0$.
\ignore{
Suppose for the sake of contradiction that $x^r(S/r) \not \in B_{2\epsilon}(X_-)$. We know $x^r(-S/r) \in B_\epsilon(X_-)$, so let $s' = \inf\{s \in(-S,S]: x^r(s/r) \not \in B_{2\epsilon}(X_-)\}$. Then in fact $s'$ is a minimum and $s' > -S$. By the Mean Value Theorem,
\begin{align*}
|x^r(-S/r) - x^r(s'/r)| & = |f(x^r(s^*/r),\lambda)|\left|-\frac{S}{r} - \frac{s'}{r}\right| \text{ for some } s^* \in (-S,s'), \lambda \in [\lambda_-,\lambda_+] \\
& < M \frac{2S}{r} \text{ since } x^r(s^*/r) \in B_{2\epsilon}(X_-) \\
& < \epsilon
\end{align*}
But because $x^r(-S/r) \in B_\epsilon(X_-)$, this implies that $x^r(s'/r) \in B_{2\epsilon}(X_-)$, which is a contradiction. Therefore, $x^r(S/r) \in B_{2\epsilon}(X_-) \subset \B(X_+,\lambda)$. \par}
\end{proof}

\begin{ex}
\end{ex}

We can apply Theorem \ref{highDTheorem} to the Lorenz equations:

\begin{equation}
\begin{split}\label{lorenz63}
\dot x & = \sigma (y - x) \\
\dot y & = x(\rho - z) - y \\
\dot z & = xy - \beta z
\end{split}
\end{equation}

As in \cite{sparrow}, we will fix $\sigma = 10$ and $\beta = 8/3$, but we will allow $\rho$ to vary with time. The corresponding augmented system for \eqref{lorenz63} is

\begin{align*}
\dot x & = 10 (y - x) \\
\dot y & = x(\Lambda(s) - z) - y \\
\dot z & = xy - \frac{8}{3} z \\
\dot s & = r
\end{align*}
for $r > 0$ and $\rho = \Lambda(s)$ satisfying \eqref{star}. We will allow $\rho$ to monotonically increase from $15$ to $23$, so $\lambda_- = 15$ and $\lambda_+ = 23$. As explained in \cite{doedel} and \cite{sparrow}, in this parameter regime there are three equilibria, one at the origin and the other two
\begin{align*}
C_1 & = \left(\sqrt{8/3(\rho - 1)}, \sqrt{8/3(\rho - 1)}, \rho - 1\right) \\
C_2 & = \left(-\sqrt{8/3(\rho - 1)}, -\sqrt{8/3(\rho - 1)}, \rho - 1\right).
\end{align*}

Both $C_{1,2}$ are attracting, and the origin is a saddle point. There are heteroclinic connections from the origin to $C_{1,2}$, and there are periodic orbits around $C_{1,2}$. There is no chaotic attractor for these values of $\rho$, although as $\rho \nearrow \rho_{het} \approx 24.0579$, the time it takes for the unstable manifold of the origin to approach $C_{1,2}$ increases without bound. \par

We will focus on the stable path
$$C_1(s) = \left(\sqrt{8/3(\Lambda(s) - 1)}, \sqrt{8/3(\Lambda(s) - 1)}, \Lambda(s) - 1\right)$$
with $C_{1\pm} = \lim_{s \to \pm \infty} C_1(s)$ and consider the possibility of R-tipping away from $(C_{1-},\lambda_-)$. From plotting solutions to \eqref{lorenz63} in MATLAB, we see that $(C_1(s),\Lambda(s))$ is not FBS (see Figure \ref{notForwardBasinStable}). Therefore, according to Theorem \ref{highDTheorem}, we can expect R-tipping for some choices of $\Lambda$ and $r>0$. Indeed if we choose
$$\Lambda(s) = 4\tanh(s) + 19$$
then for some values of $r > 0$ the pullback attractor to $(C_{1-},\lambda_-)$ tracks $(C_1(s),\Lambda(s))$ and for some values of $r > 0$ it tips to $(C_2(s),\Lambda(s))$ (see Figure \ref{fig:Lorenz}).

\begin{figure}
\centering
\includegraphics[scale = 0.8]{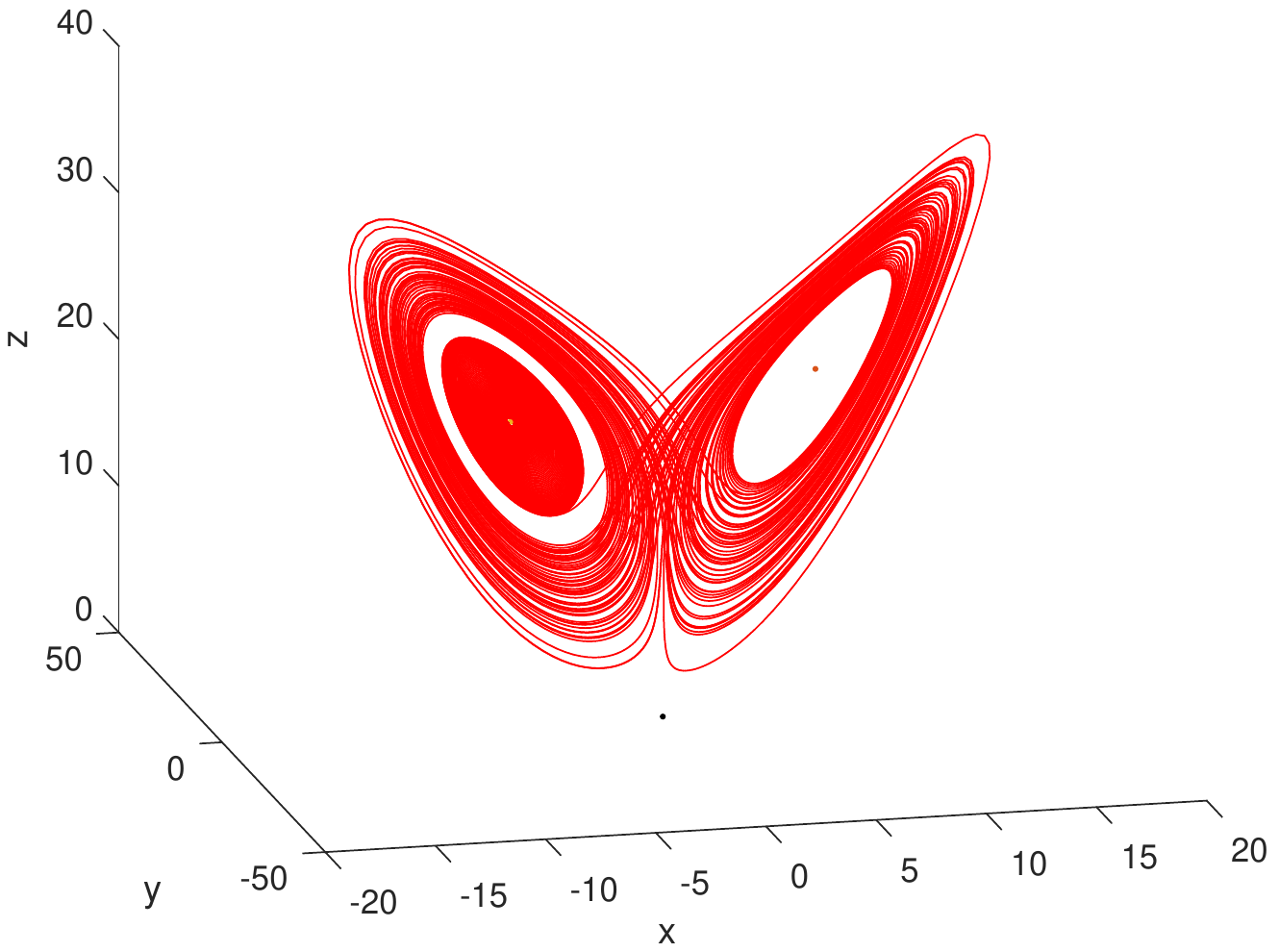}
\caption{Approximate solution curve to \eqref{lorenz63} with $\rho = 22.9$ and initial condition $\left(\sqrt{8/3(14.1)},\sqrt{8/3(14.1)},14.1 \right)$ (i.e. $C_1(s)$ when $\Lambda(s) = 15.1$). The trajectory converges to a point on the stable path $(C_2(s),\Lambda(s))$, indicating that $(C_1(s),\Lambda(s))$ is not forward basin stable.}
\label{notForwardBasinStable}
\end{figure}

\begin{figure}
	\centering
	\begin{subfigure}[b]{0.45\textwidth}
		\includegraphics[width = \textwidth]{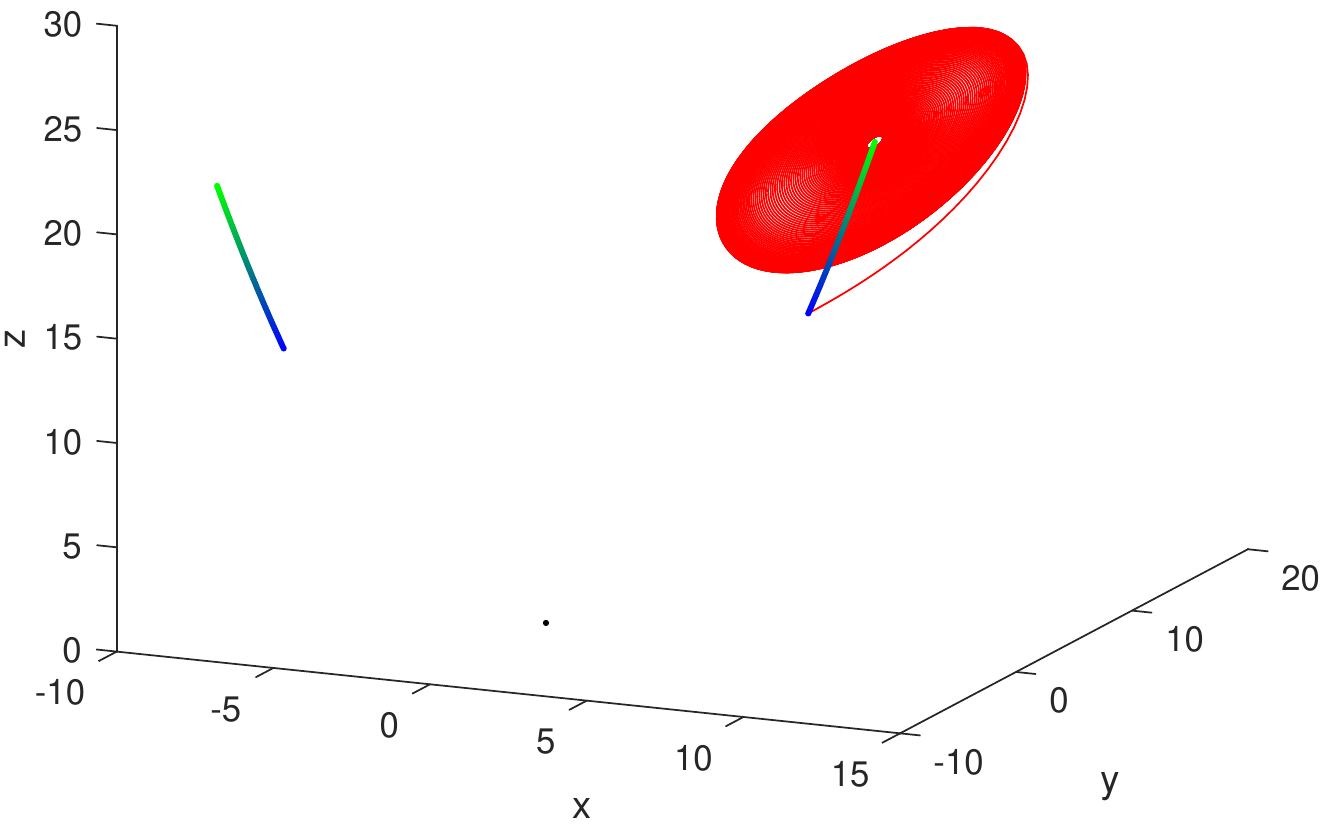}
		\caption{r = 13}
		\label{ex:LorenzNotTips}
	\end{subfigure}
	\begin{subfigure}[b]{0.45\textwidth}
		\includegraphics[width = \textwidth]{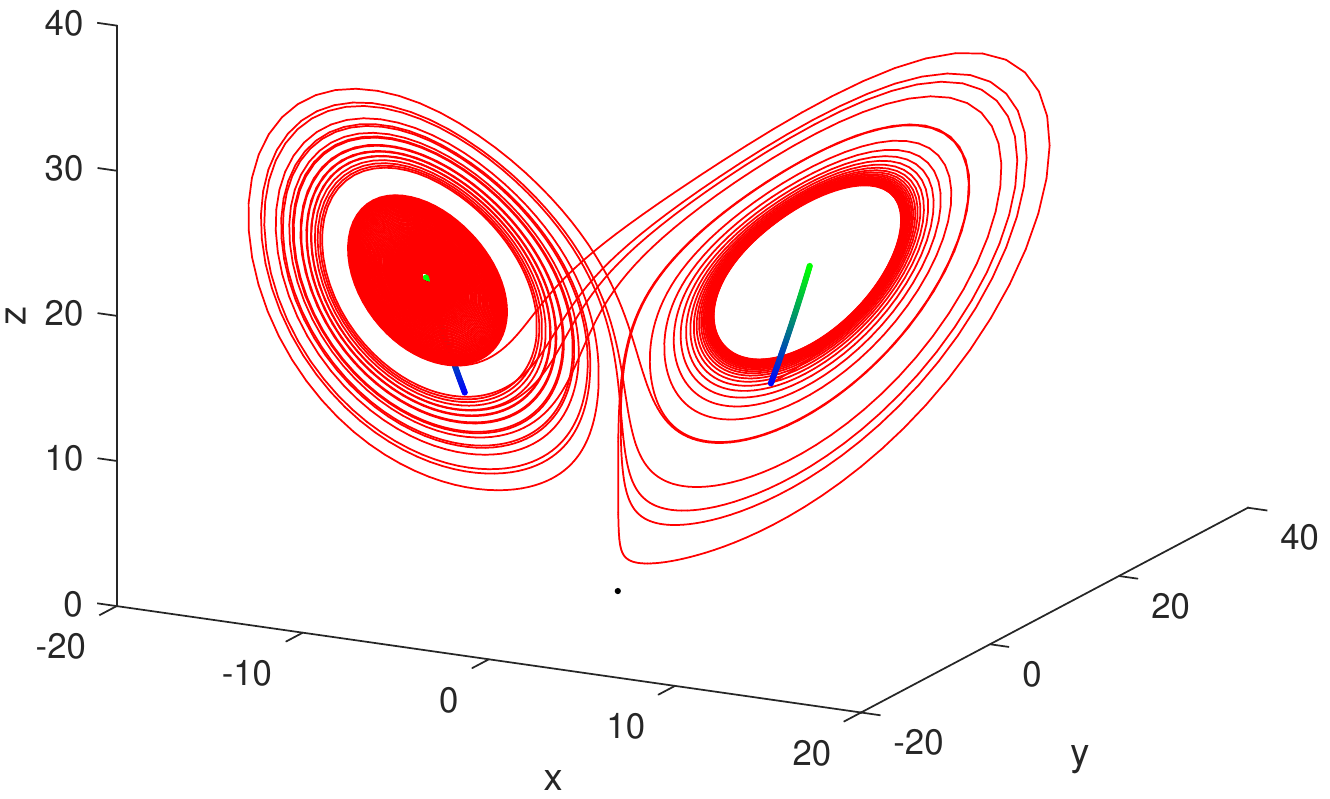}
		\caption{r = 15}
		\label{ex:LorenzTips}
	\end{subfigure}
	\caption{In both figures, the blue/green dots mark the positions of $(C_{1,2}(s),\Lambda(s))$. The blue dots correspond to small values of $s$, and the green dots correspond to large values of $s$. The red curve is the pullback attractor to $(C_{1-},\lambda_-)$. When $r = 13$, the trajectory endpoint tracks $(C_1(s),\Lambda(s))$, but when $r = 15$ it does not.}
	\label{fig:Lorenz}
\end{figure}

\section{Forward Basin Stability and Forward Inflowing Stability}\label{forwardBasinStable}

Now that we have successfully generalized statements 2 and 3 of Theorem \ref{1D}, we will turn our attention to statement 1, which says that if a path is FBS in a 1-dimensional system, then there will be no R-tipping away from that path. However, as the next example shows, FBS is not enough to prevent R-tipping in systems where $n > 1$.

\newpage

\begin{ex}\label{counterex}
\end{ex}

Consider the following 2-dimensional system (which we have adapted from Example 5.11 of \cite{Robinson}):

\begin{equation}
\begin{split}\label{homolambda}
\dot x & = -y \\
\dot y & = -(x-\lambda)+2(x-\lambda)^3 - y((x-\lambda)^2 - (x-\lambda)^4 - y^2)
\end{split}
\end{equation}

Then \eqref{homolambda} has fixed points at $(\lambda,0)$ and $\left( \lambda \pm \frac{1}{\sqrt{2}}, 0 \right)$. There are two homoclinic orbits at $(\lambda,0)$. Both $\left(\lambda \pm \frac{1}{\sqrt{2}}, 0 \right)$ are attracting, and their basins of attraction are the regions inside the corresponding homoclinic orbits. See Figure \ref{fig:homo}.

\begin{figure}
\centering
\includegraphics[scale = 0.5]{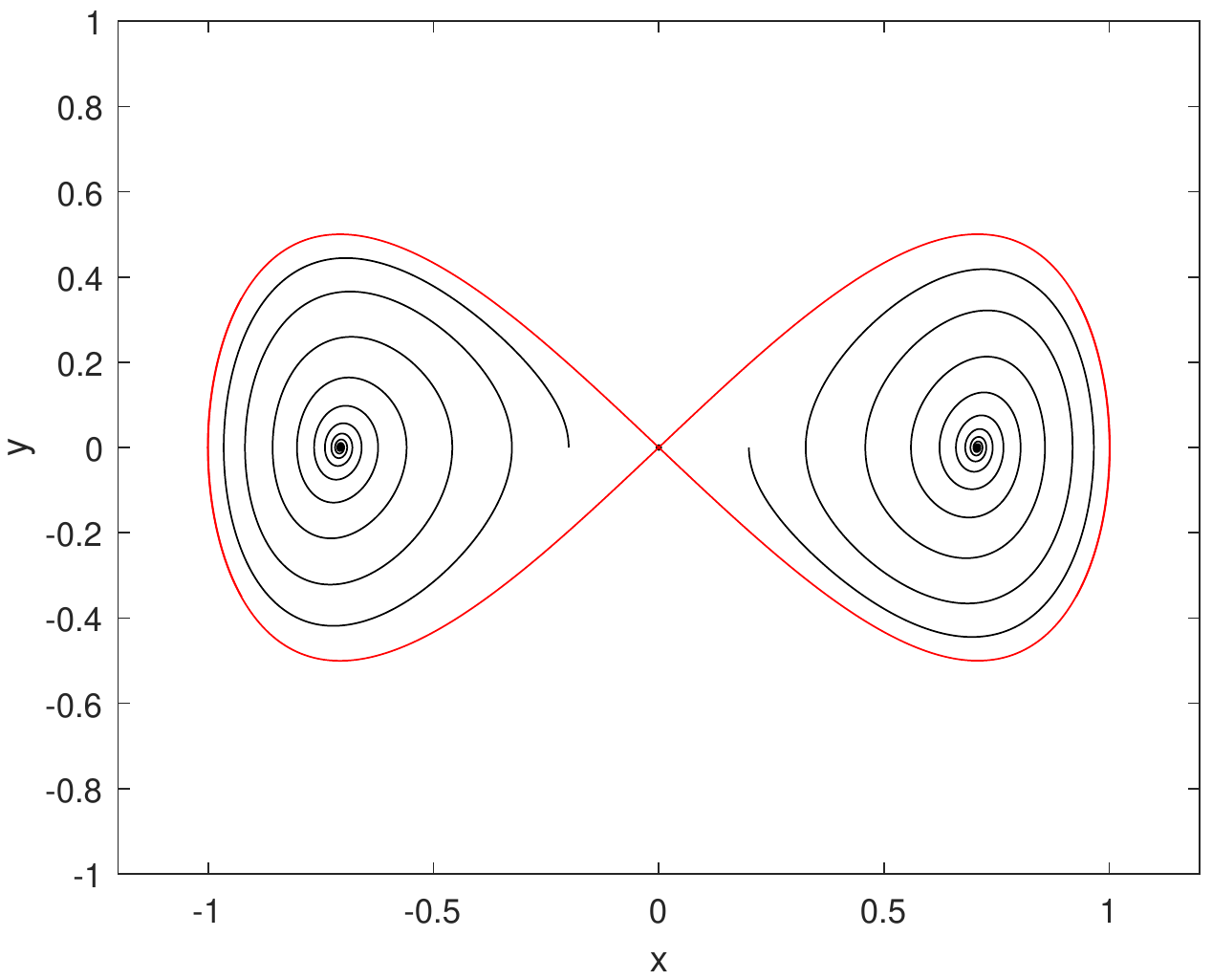}
\caption{Phase portrait for system \eqref{homolambda}. $(\lambda,0)$ is a saddle point with two homoclinic orbits (shown in red). Both $\left(\lambda \pm \frac{1}{\sqrt{2}}, 0 \right)$ are attracting equilibria; their basins of attraction are the regions inside the homoclinic loops.}
\label{fig:homo}
\end{figure}

Then we will let $\lambda$ change with time at a rate $r > 0$ by setting $\lambda = \Lambda(s)$ and $s=rt$:

\begin{equation}
\begin{split}\label{homoaug}
\dot x & = -y \\
\dot y & = -(x-\Lambda(s))+2(x-\Lambda(s))^3 - y((x-\Lambda(s))^2 - (x-\Lambda(s))^4 - y^2) \\
\dot s & = r
\end{split}
\end{equation}

For $\Lambda$ we will take $\Lambda(s) = \frac{13}{40} (1 + \tanh(s))$ so that $\lambda_- = 0$ and $\lambda_+ = 0.65 < \frac{1}{\sqrt{2}}$. Let $X(s) = \left(\Lambda(s) + \frac{1}{\sqrt{2}}, 0 \right)$. Then $X_- = \left( \frac{1}{\sqrt{2}}, 0 \right)$ and $X_+ = \left( \frac{1}{\sqrt{2}} + \frac{13}{20}, 0 \right)$. Because $0 < \Lambda(s) < \frac{1}{\sqrt{2}}$ for all $s$, the stable path $(X(s),\Lambda(s))$ is FBS. Nevertheless, R-tipping can occur away from $X_-$. See Figure \ref{fig:homotip}.

\begin{figure}
	\centering
	\begin{subfigure}[b]{0.45\textwidth}
		\includegraphics[width = \textwidth]{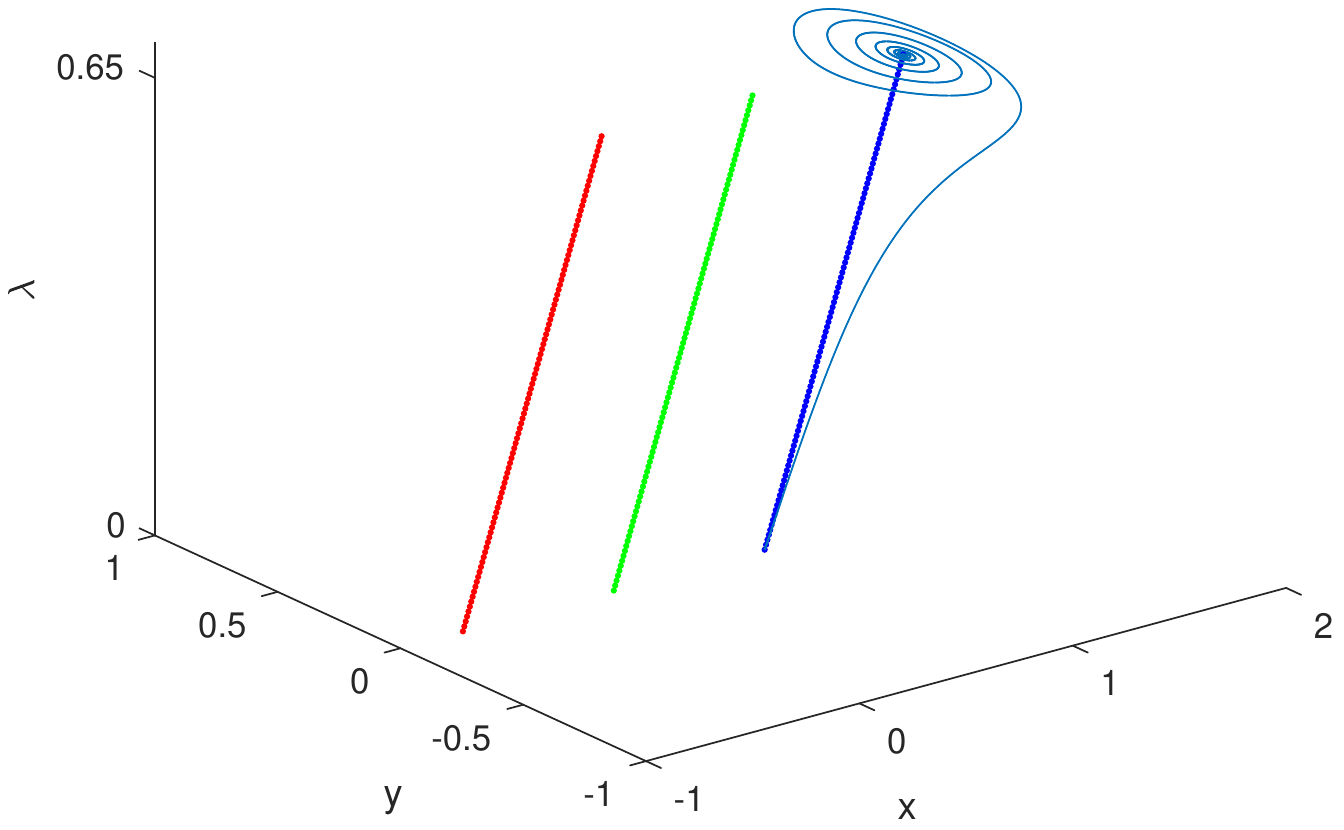}
		\caption{$r = 1$}
		\label{fig:tracks}
	\end{subfigure}
	\begin{subfigure}[b]{0.45\textwidth}
		\includegraphics[width = \textwidth]{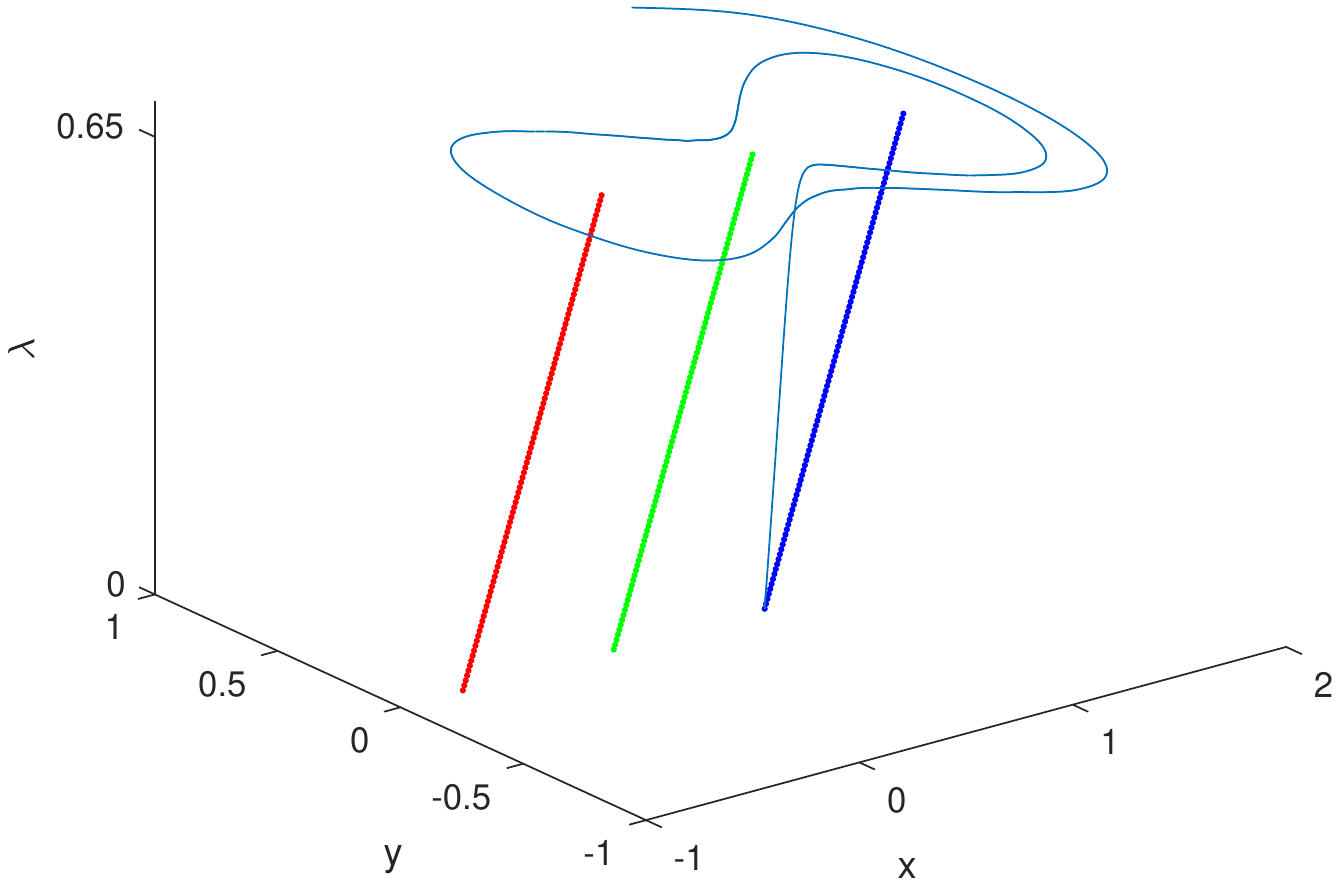}
		\caption{$r = 5$}
		\label{fig:nottracks}
	\end{subfigure}
	\caption{In both figures, the red dots represent the stable path $\left(-1/\sqrt{2} + \Lambda(s),0\right)$, the green dots represent the unstable path $(\Lambda(s),0)$, and the blue dots represent the stable path $X(s) = \left(1/\sqrt{2} + \Lambda(s),0\right)$. The blue curve is the pullback attractor to $X_-$. When $r = 1$, the pullback attractor endpoint tracks $X(s)$. When $r = 5$, and the pullback attractor diverges to infinity (does not endpoint track $X(s)$). Therefore, R-tipping has occurred. This shows that R-tipping can occur even when a path is forward basin stable in multi-dimensional systems.}
	\label{fig:homotip}
\end{figure}

\vspace{.2in}

Example \ref{counterex} shows that FBS of a path is not enough to guarantee against R-tipping in 2-dimensional systems. The reason that FBS is not sufficient in a 2-dimensional (or higher) system is that a point $x$ might be in the basin of attraction of a fixed point $p$, but the velocity vector at $x$ may not point toward $p$. The more dimensions there are in a system, the more directions there are to move, so in a sense this makes R-tipping more likely to happen. Although Example \ref{counterex} is an example of a 2-dimensional system, it would not be difficult to construct a system of higher dimension in which there can be R-tipping away from a path that is FBS. \par

Therefore, since FBS is not enough to prevent R-tipping in systems of dimension greater than 1, we want to find a different condition that is sufficient to prevent R-tipping. We propose a condition, called {\em forward inflowing stability} (FIS), which guarantees that R-tipping cannot happen away from a stable path. In what follows, we will assume that we have a system of the form \eqref{augmented} with a stable path $(X(s),\Lambda(s))$.

\begin{defn}\label{inflowingstable}
We say the stable path $(X(s),\Lambda(s))$ is {\em forward inflowing stable} if for each $s \in \R$ there is a compact set $K(s)$ such that
\begin{enumerate}
\item $X(s) \in \Int K(s)$ for all $s \in \R$;
\item if $s_1 < s_2$, then $K(s_1) \subset K(s_2)$;
\item if $x \in \partial K(s)$, then $f(x,\Lambda(s))$ points strictly into $K(s)$;
\item $X_\pm \in \Int K_\pm$ where $K_-  = \bigcap_{s \in \R} K(s)$ and $K_+ = \overline{\bigcup_{s \in \R} K(s)}$; and
\item $K_+ \subset \B(X_+,\lambda_+)$ is compact.
\end{enumerate}
\end{defn}

Just as the notion of FBS compares the positions of equilibria along a path to basins of attraction later on in the path, FIS compares the positions of equilibria along the path to forward invariant sets (sets for which solutions ``flow in'') later on down the path.

\begin{prop}\label{inflowing}
If the stable path $(X(s),\Lambda(s))$ is FIS, then there is no R-tipping away from $X_-$.
\end{prop}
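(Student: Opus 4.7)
The plan is to show that the pullback attractor $x^r(t)$ to $X_-$ eventually lies (and stays) inside the moving trap $K(rt)$, and then to conclude convergence to $X_+$ via Lemma \ref{compactBasinSubset} applied to $K_+$. The core of the argument is a forward-invariance step for the expanding family $\{K(s)\}_{s\in\R}$ along the augmented flow.

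First I would use condition 4 to choose $\epsilon > 0$ small enough that $B_\epsilon(X_-) \subset \Int K_-$, and then apply Lemma \ref{beginning} to obtain some $S > 0$ with $x^r(t) \in B_\epsilon(X_-) \subset K_- = \bigcap_{s \in \R} K(s)$ whenever $rt < -S$. Fixing any one such time $t_0$ gives $x^r(t_0) \in K(rt_0)$, which is the initial datum for the invariance argument.

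The main step is to show that $x^r(t) \in K(rt)$ for every $t \geq t_0$. I would do this by contradiction, setting $T = \inf\{t > t_0 : x^r(t) \notin K(rt)\}$ and assuming $T < \infty$. For $\tau \in [t_0, T)$, the nesting condition (2) gives $x^r(\tau) \in K(r\tau) \subset K(rT)$, so continuity of $x^r$ and closedness of $K(rT)$ force $x^r(T) \in K(rT)$. If $x^r(T) \in \Int K(rT)$, continuity alone yields $x^r(t) \in K(rT) \subset K(rt)$ for $t$ slightly greater than $T$, contradicting the choice of $T$. If instead $x^r(T) \in \partial K(rT)$, then condition 3 guarantees that $f(x^r(T), \Lambda(rT))$ points strictly into $K(rT)$, so the trajectory enters $\Int K(rT)$ immediately and the same contradiction follows. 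Hence $T = \infty$ and $x^r(t) \in K(rt) \subset K_+$ for all $t \geq t_0$.

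To finish, condition 5 says $K_+$ is a compact subset of $\B(X_+, \lambda_+)$, so Lemma \ref{compactBasinSubset} yields an $S' > 0$ such that any trajectory of \eqref{augmented} sitting in $K_+$ at a time $T'$ with $rT' > S'$ converges to $X_+$; applying this to $x^r$ at any sufficiently large time gives $x^r(t) \to X_+$, ruling out R-tipping. I expect the delicate step to be the forward-invariance argument, because trajectory and trap evolve together: one needs the nesting in condition 2 to inherit the position of $x^r(T)$ in $K(rT)$ from the trajectory's past, and the strict inward-pointing hypothesis in condition 3 to preclude immediate escape from the boundary in the one case where continuity alone is not enough.
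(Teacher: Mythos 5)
Your proof is correct and follows essentially the same approach as the paper: both reduce to showing that the moving trap (equivalently, the augmented set $K = \bigcup_{s} K(s) \times \{s\}$) is forward invariant, then apply Lemma \ref{compactBasinSubset} to the compact set $K_+ \subset \B(X_+,\lambda_+)$ to conclude $x^r(t) \to X_+$. The paper states the forward-invariance step more compactly as the geometric observation that the augmented vector field points strictly into $K$ along $\partial K$ (combining conditions 2 and 3 with $\dot s = r > 0$), whereas you unwind the same fact into an explicit first-escape-time contradiction; the substance is identical.
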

\begin{proof}
Fix $r > 0$. By FIS, there exist sets $K(s)$ satisfying the requirements of Definition \ref{inflowingstable}. Set $K = \cup_{s \in \R} K(s) \times \{s\}$. If we pick a point on the boundary of $K$ when $s = s_0$, then $\frac{dx}{dt} = f(x,\Lambda(s_0))$ points strictly into $K(s_0)$. Since $K(s_0) \subset K(s)$ if $s_0 \le s$ and $\frac{ds}{dt} = r > 0$, this implies that the vector field of \eqref{augmented} points strictly into $K$. Therefore, $K$ is forward invariant under the flow of \eqref{augmented}. \par

Let $x^r(t)$ be the pullback attractor to $X_-$. Because $X_- \in \Int K_-$ and $K_- = \bigcap_{s \in \R} K(s)$, there is a $T \in \R$ such that $x^r(t) \in K(rt)$ for all $t < T$. Since $K$ is forward invariant, this implies that $x^r(t) \in K(rt)$ for all $t \in \R$. In particular, $x^r(t) \in K_+$ for all $t \in \R$. \par

We know $K_+ \subset \B(X_+,\lambda_+)$ is compact. By Lemma \ref{compactBasinSubset} this implies $x^r(t) \to (X_+,\lambda_+)$ as $t \to \infty$. Therefore, $x^r(t)$ endpoint tracks the stable branch $(X(s), \Lambda(s))$ regardless of $r > 0$, so there is no R-tipping.
\end{proof}

\begin{ex}\label{1Dexample}
\end{ex}
Consider Figure \ref{fig:1dISex}. We will assume that $\Lambda(s)$ is injective, so that $s$ and $\lambda$ are in one-to-one correspondence. Let $(X(s),\Lambda(s))$ be the stable path that is defined for all $\lambda$-values. The red curves specify a choice of $K(s)$ in the following way: $K(s)$ is the closed interval between the two red curves when $\lambda = \Lambda(s)$. Based on what is shown in the figure, $\{K(s)\}$ satisfies the requirements in Definition \ref{inflowingstable}, which shows that $(X(s),\Lambda(s))$ is FIS. The set $K = \cup_{s \in \R} K(s) \times \{s\}$ forms a forward invariant``tube'' around the stable path $(X(s),\Lambda(s))$. As shown in Proposition \ref{inflowing}, the pullback attractor for $X_-$ is always contained in $K$. There can be no R-tipping away from $X_-$ for this reason. \par

\begin{figure}
\centering
\includegraphics[scale = 0.7]{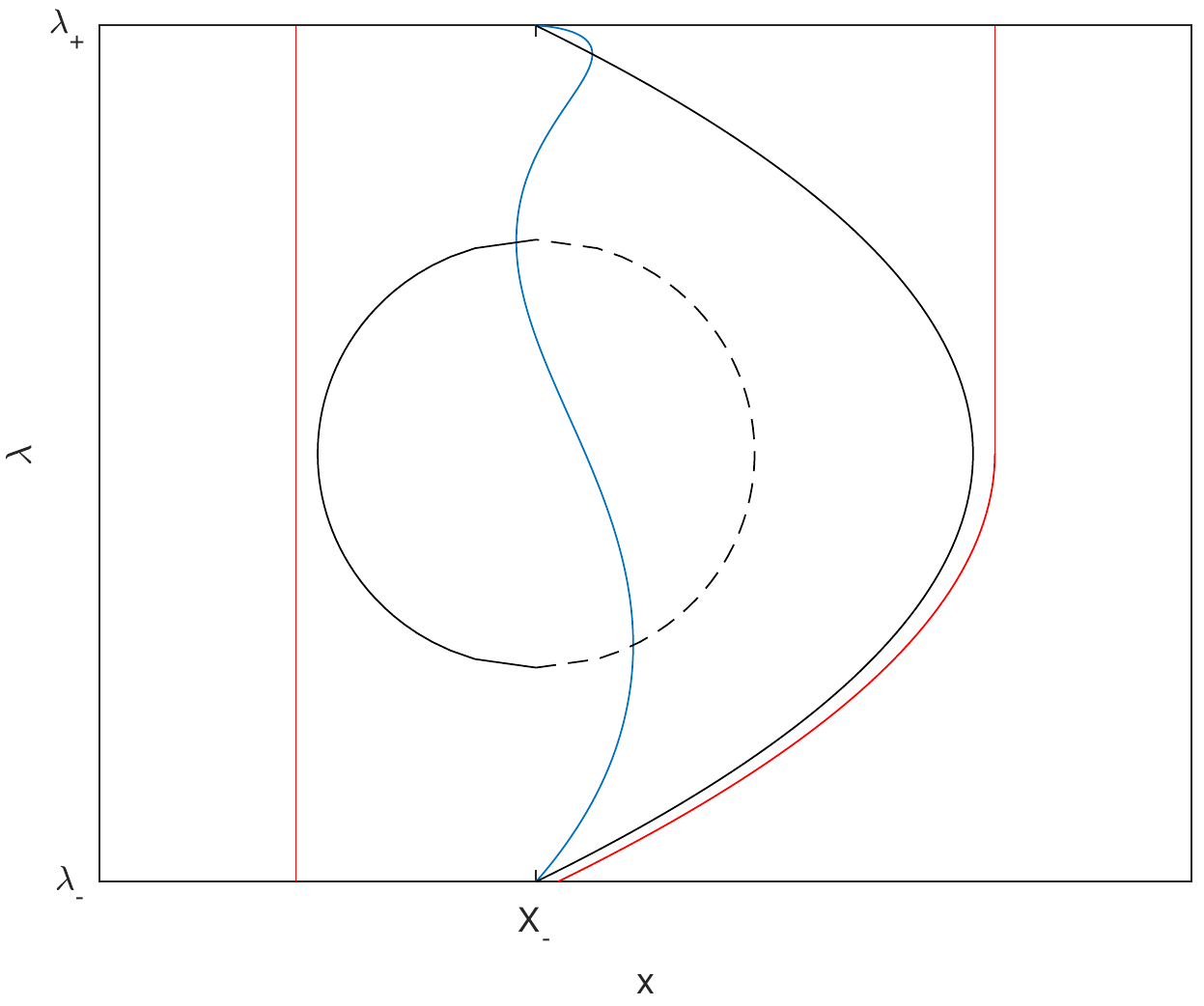}
\caption{Here there three paths: two stable (solid black lines) and one unstable (dashed black line). Let $(X(s),\Lambda(s))$ be the stable path that is defined for all $\lambda$-values. The red curves define $K(s)$ for each $s$ if $\Lambda(s)$ is one-to-one. The pullback attractor for $(X_-,\lambda_-)$ is shown in blue. Notice that the pullback attractor is fully contained in $K = \cup_{s \in \R} K(s) \times \{s\}$ and hence endpoint tracks $(X(s),\Lambda(s))$.}
\label{fig:1dISex}
\end{figure}

\vspace{.2in}

In general, FBS and FIS are conditions that are independent of each other. The path in Example \ref{1Dexample} is not FBS but is FIS. Hence, FIS does not imply FBS. Likewise, FBS does not imply FIS, as shown in Figure \ref{fig:1dFBSex}. Also note that in multi-dimensional systems FBS cannot imply FIS, as FIS prevents R-tipping, but FBS does not.

\begin{figure}
\centering
\includegraphics[scale = 0.7]{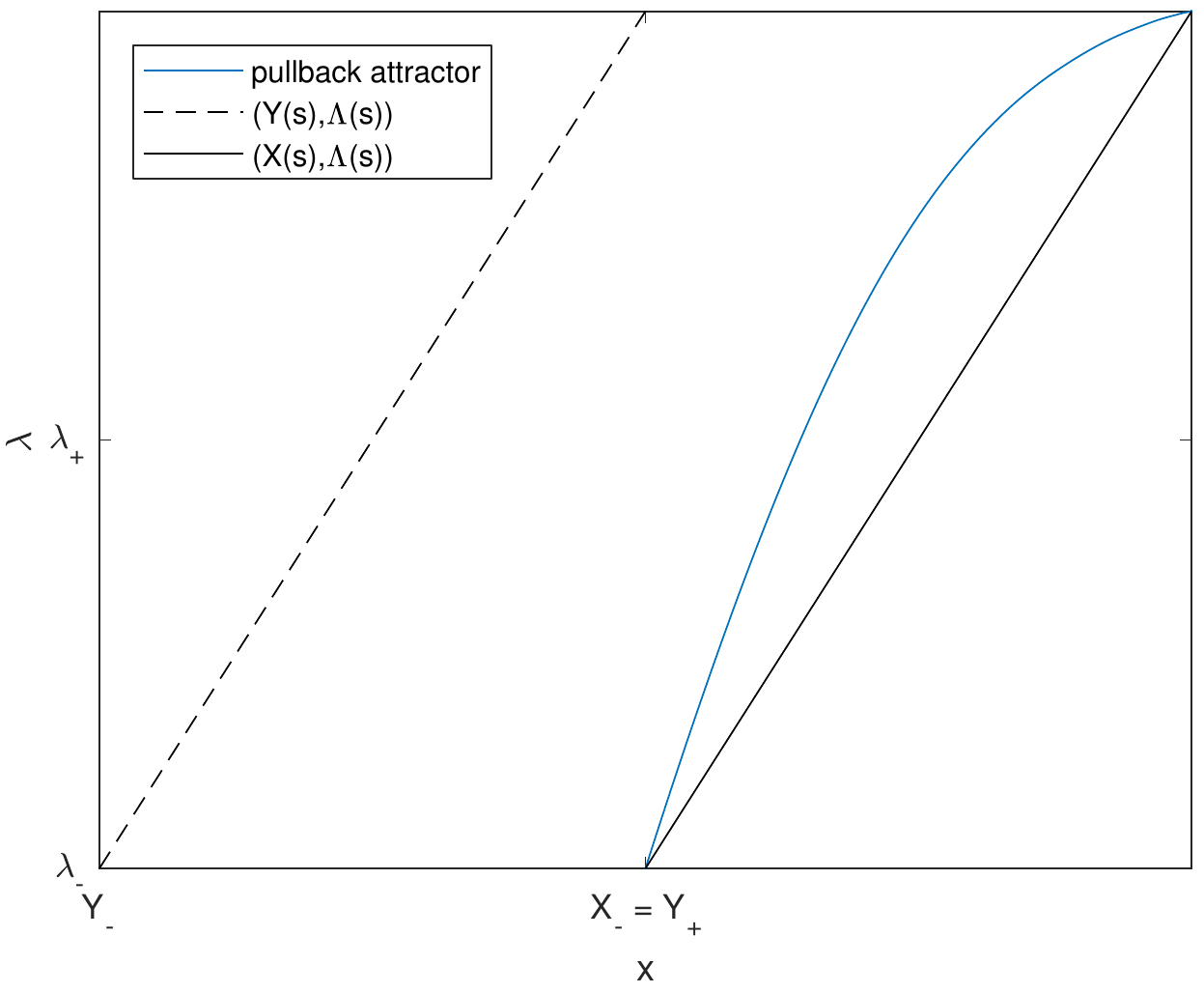}
\caption{The solid black curve is the stable path $(X(s),\Lambda(s))$, and the dashed black curve is an unstable path $(Y(s),\Lambda(s))$ satisfying $X_- = Y_+$. The blue curve is the pullback attractor to $X_-$. Assuming that $\Lambda(s)$ is one-to-one, $(X(s),\Lambda(s))$ is forward basin stable. However, it is not forward inflowing stable, since $X_-$ is on the boundary of $\B(X_+,\lambda_+)$. Any possible choice of $K_-$ must contain a neighborhood of $X_-$. Since $K_- \subset K_+$, $K_+$ cannot be fully contained in $\B(X_+,\lambda_+)$.}
\label{fig:1dFBSex}
\end{figure}

\section{Monotone Systems}\label{monotoneSection}

We will now focus our attention on rate-induced tipping in a special class of systems called {\em monotone systems}. The benefit of monotone systems is that their extra structure enables us in Proposition \ref{Chris} to prove when rate-induced tipping can happen, without having to calculate the basins of attraction of the equilibria (which can be chaotic in systems of dimension 3 or more, such as in Lorenz `63--see \cite{doedel}). Likewise, in Proposition \ref{Chris2} we will be able to prove when rate-induced tipping cannot happen, using a simpler condition than inflowing stability. \par

We begin with the notation and definition of a monotone system, which are adapted from Chapter 4 of \cite{monotone}.

\begin{defn}
Suppose $x = (x_1,\ldots,x_n) ,y = (y_1,\ldots,y_n) \in \R^n$. Then
\begin{align*}
x \le y & \iff x_i \le y_i \text{ for all } i \\
x < y & \iff x \le y \text{ and } x \ne y \\
x \ll y & \iff x_i < y_i \text{ for all } i.
\end{align*}
If $K,L \subset \R^n$ are sets, then we say $K \ll L$ if $x \ll y$ for all $x \in K$ and $y \in L$.
\end{defn}

\begin{defn}
We define a system of the form
\begin{align}\label{monotone}
\dot x = f(x)
\end{align}
for $f: U \to \R^n$, $U \subset \R^n$ open, to be {\em monotone} if
$$\frac{\partial f_i}{\partial x_j} (x) \ge 0$$
for all $i \ne j$ and all $x \in U$.
\end{defn}

Monotone systems have the property that if $x \le y$ and $t \ge 0$, then $x \cdot t \le y \cdot t$. (See Chapter 4 of \cite{monotone} for a reference). We also have the following result:

\begin{lemma}\label{K1K2}
Suppose \eqref{monotone} is a monotone system. For any $p \in U$, define $K_1(p) = \{x \in U: x \le p\}$ and $K_2(p) = \{x \in U: x \ge p\}$.
\begin{enumerate}
\item If $f_i(p) < 0$ for all $i$, then the flow of \eqref{monotone} points into $K_1$ on the boundary of $K_1$.
\item If $f_i(p) > 0$ for all $i$, then the flow of \eqref{monotone} points into $K_2$ on the boundary of $K_2$.
\end{enumerate}
\end{lemma}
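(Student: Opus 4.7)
The plan is to reduce the geometric condition ``the flow points into $K_1(p)$ at $\partial K_1(p)$'' to componentwise inequalities on $f$, and then bound those components using the off-diagonal sign hypothesis defining a monotone system. A point $x \in \partial K_1(p)$ satisfies $x \le p$ and has a nonempty active set $I(x) := \{i : x_i = p_i\}$; the outward normal cone to $K_1(p)$ at $x$ is the nonnegative span of $\{e_i : i \in I(x)\}$, so to show that $f(x)$ points strictly into $K_1(p)$ it suffices to verify $f_i(x) < 0$ for every $i \in I(x)$.

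To establish this, I would interpolate between $x$ and $p$: for each $i \in I(x)$, set $g(t) = f_i(x + t(p - x))$ for $t \in [0,1]$. Differentiating,
$$g'(t) = \sum_{j} \frac{\partial f_i}{\partial x_j}\bigl(x + t(p - x)\bigr)(p_j - x_j).$$
The $j = i$ term vanishes because $x_i = p_i$, and for $j \ne i$ monotonicity gives $\frac{\partial f_i}{\partial x_j} \ge 0$ while $x \le p$ gives $p_j - x_j \ge 0$. Hence $g' \ge 0$ on $[0,1]$, so $f_i(x) = g(0) \le g(1) = f_i(p) < 0$, which is what we needed. Statement 2 follows by the symmetric argument: for $x \in \partial K_2(p)$ with $x_i = p_i$, interpolating along $p + t(x - p)$ yields $f_i(x) \ge f_i(p) > 0$, and the inward cone at $x$ is generated by $\{-e_i : i \in I(x)\}$, so $f_i(x) > 0$ for all $i \in I(x)$ is exactly the required condition.

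The main potential obstacle is a purely domain-theoretic one: the interpolation needs the segment from $x$ to $p$ to lie in $U$ so that the partials can be evaluated along it. This is automatic if $U$ is order convex (i.e.\ $a, b \in U$ with $a \le b$ implies the order interval $[a,b] \subset U$), which is the standard setting for monotone systems in \cite{monotone}. I would either record this as a standing assumption at the beginning of the section or verify it for each $p$ of interest. Note also that the conclusion is \emph{strict} inward-pointing, which is precisely what the FIS condition in Definition \ref{inflowingstable} demands, so the lemma is stated in exactly the form needed for Propositions \ref{Chris} and \ref{Chris2}.
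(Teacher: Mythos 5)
Your proof is correct and follows essentially the same route as the paper's: both reduce the inward-pointing condition to the sign of $f_i$ at the active coordinates and establish it by interpolating $f_i$ along the segment from $x$ to $p$, using $\partial f_i/\partial x_j \ge 0$ ($j\ne i$) together with the coordinatewise inequality to show the derivative along the segment is nonnegative, so $f_i(x)$ inherits the sign of $f_i(p)$. One minor slip: in the $K_2$ case what you call ``the inward cone'' generated by $\{-e_i : i \in I(x)\}$ is in fact the outward normal cone, though the conclusion $f_i(x)>0$ for all active $i$ is exactly right; your explicit note that the segment must lie in $U$ (hence $U$ order convex) is a point the paper leaves implicit.
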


\begin{proof}
The result is trivial if $n = 1$, so we will assume $n \ge 2$. We will prove the statement for $K_2$; the proof for $K_1$ is similar. \par

Assume $f_i(p) > 0$ for all $i$. Pick any point $y = (y_1,\ldots,y_n)$ not equal to $p$ on the boundary of $K_2$. Then there is at least one $i$ such that $y_i = p_i$. The line $\ell$ between $y$ and $p$ can be parametrized by $t$ in the following way:
$$\ell(t) = ((y_1 - p_1) t + p_1, (y_2 - p_2) t + p_2, \ldots, (y_n - p_n) t + p_n).$$
Since $y_i = p_i$ and $p < y$, we need to show that $\dot x_i > 0$ at $y$, or $f_i(\ell(1)) > 0$. We know that $f_i(\ell(0)) > 0$, so it will suffice to show that $(f_i \circ \ell)'(t) \ge 0$. In general, we have
\begin{align*}
(f_i \circ \ell)'(t) & = D f_i(\ell(t)) \cdot \ell'(t) \\
& = \left( \frac{\partial f_i}{\partial x_1} (\ell(t)), \frac{\partial f_i}{\partial x_2} (\ell(t)), \ldots, \frac{\partial f_i}{\partial x_n} (\ell(t)) \right) \cdot (y_1 - p_1,y_2 - p_2,\ldots,y_n - p_n) \\
& \ge 0
\end{align*}
because each $\frac{\partial f_i}{\partial x_j}(\ell(t)) \cdot (y_j - p_j) \ge 0$.
\end{proof}

\begin{figure}
\centering
\includegraphics[scale = 0.7]{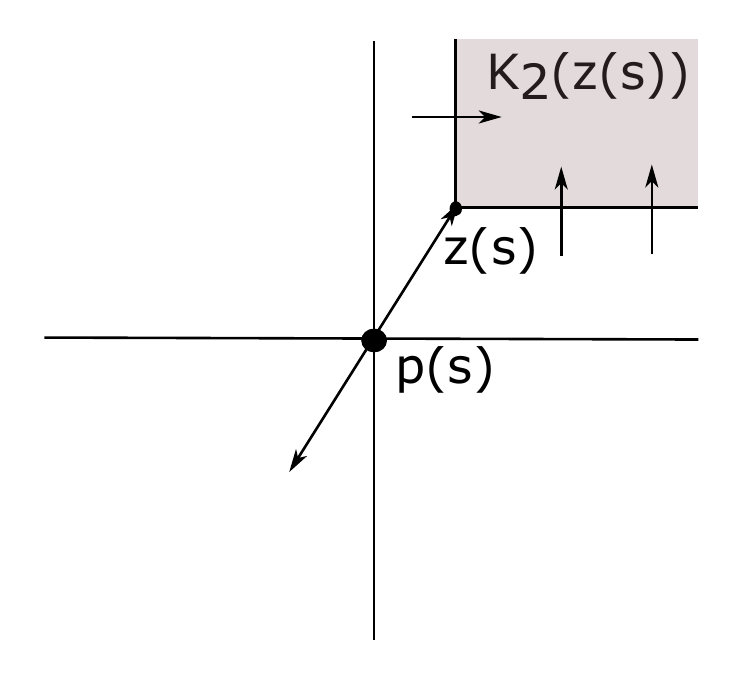}
\caption{If $Df(p(s))$ has a positive real eigenvalue whose associated eigenvector has all positive components, then there is a point $z(s)$ such that $\{z(s) \cdot_{\Lambda(s)} t\}_{t \le 0} \gg \{p(s)\}$ and $f_i(z(s),\Lambda(s)) > 0$ for all $i$. Using $z(s)$, we can define the box $K_2(z(s))$ as in Proposition \ref{Chris} satisfying $K_2(z(s)) \gg \{p(s)\}$ and such that the flow of \eqref{autonomous} with $\lambda = \Lambda(s)$ is pointing in on all sides along the boundary of $K_2(z(s))$.}
\label{fig:monotone}
\end{figure}

Then we have the following result about R-tipping in monotone systems:

\begin{prop}\label{Chris}
Suppose we have a system of the form \eqref{augmented} where $\dot x = f(x,\lambda)$ is a monotone system for each $\lambda \in [\lambda_-,\lambda_+]$. Let $(p(s),\Lambda(s))$ be a path and $(q(s),\Lambda(s))$ a stable path; denote $p_{\pm} = \lim_{s \to \pm \infty} p(s)$ and $q_{\pm} = \lim_{s \to \pm \infty} q(s)$. Suppose for all $s$ (including in the limits) $Df(p(s))$ has a positive eigenvalue whose associated eigenvector has all positive components.
\begin{enumerate}
\item If $q(s) \ll p(s)$ (resp. $q(s) \gg p(s)$) for all $s \in \R$, including in the limits as $s \to \pm \infty$, and there is a $u < v$ such that $q(u) \gg p(v)$ (resp. $q(u) \ll p(v)$), then there is a parameter shift $\tilde \Lambda$ such that there will be R-tipping away from $q_-$ for this $\tilde \Lambda$.

\item If $q(s) \ll p(s)$ (resp. $q(s) \gg p(s)$) for all $s \in \R$, including in the limits as $s \to \pm \infty$, and $q_- \gg p_+$ (resp. $q_- \ll p_+$), then there will be R-tipping away from $q_-$ for this $\Lambda$ and for large enough $r > 0$.
\end{enumerate}
\end{prop}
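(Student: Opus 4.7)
The plan is to exploit the monotone structure together with Lemma \ref{K1K2} to trap the pullback attractor inside a forward-invariant ``upper box'' strictly above the unstable path $p(s)$ (in the case $q \ll p$), thereby preventing convergence to $q_+$. I will describe only the case $q(s) \ll p(s)$; the $q \gg p$ case is symmetric, using $K_1$ and reversed inequalities.

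First I would set up the boxes. The positive-eigenvalue/positive-eigenvector hypothesis at $p(s)$ produces, for each $s$, a point $z(s) = p(s) + \epsilon(s) v(s)$ with $\epsilon(s) > 0$ arbitrarily small, $z(s) \gg p(s)$, and $f_i(z(s),\Lambda(s)) > 0$ for every coordinate $i$ (exactly the geometry sketched in Figure \ref{fig:monotone}). Lemma \ref{K1K2}(2) then gives that $K_2(z(s))$ is forward invariant under the autonomous flow at $\lambda = \Lambda(s)$, and by continuity of $f$ in $\lambda$ the box remains invariant for parameters in a neighborhood of $\Lambda(s)$. At $s = +\infty$, I pick $z_+ \gg p_+$ close enough to $p_+$ that $q_+ \ll z_+$ (possible since $q_+ \ll p_+$), and find $S_0$ large enough that $K_2(z_+)$ is forward invariant under the non-autonomous flow of \eqref{augmented} for all $s \ge S_0$.

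For statement 1, choose $u<v$ with $q(u) \gg p(v)$ and take $\epsilon(v)$ small enough that $q(u) \gg z(v)$. I would then apply the reparametrization argument from the proof of Theorem \ref{highDTheorem}(1): fix small $r>0$ so the pullback attractor stays close to $q(s)$ up to $s=u$, then use $\sigma$ as in \eqref{sigmaFunc} with large $M$ and small $\eta$ so that the trajectory moves by less than the coordinate-wise margin $q(u) - z(v)$ during the fast jump on $[u,v]$. The resulting pullback attractor lies in $\Int K_2(z(v))$ at $s = v$. It remains to transfer the trajectory from $K_2(z(v))$ to $K_2(z_+)$ as $\Lambda$ drifts through $[\Lambda(v), \lambda_+]$; the cleanest route is to build a continuous family $\{K_2(z(s))\}_{s \in [v, S_0]}$ patched to the limit box, using compactness and continuity of $f$ to argue that the non-autonomous trajectory cannot escape $\bigcup_{s \ge v} K_2(z(s))$ and therefore eventually lies in $K_2(z_+)$, which excludes $q_+$. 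For statement 2 no reparametrization is needed: use $\Lambda$ with large $r$. Apply Lemma \ref{beginning} and the Mean Value Theorem estimate from the proof of Theorem \ref{highDTheorem}(2) to force $x^r(S_0/r) \in B_\delta(q_-)$ for $\delta$ smaller than the coordinate-wise margin $q_- - z_+$; then $x^r(S_0/r) \in K_2(z_+)$, and forward invariance of the limit box finishes the argument.

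The main obstacle is the transfer step in statement 1: although each $K_2(z(s))$ is invariant at its own parameter value, the family need not be nested, so a priori the trajectory could exit one box before entering the next. Resolving this requires either constructing a monotone-nested (decreasing) path $z(s)$ via an implicit-function/compactness argument on $[v,S_0]$, or thickening to a single enveloping box whose invariance persists across an open range of parameters. In either case one must simultaneously maintain $z(s) \gg p(s)$, keep $z(s)$ far enough below the trajectory, and verify that the envelope avoids $q_+$—all of which ride on the strict inequalities in the hypothesis and uniform continuity of $f$ on compact sets.
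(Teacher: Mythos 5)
Your overall strategy matches the paper's closely: trap the pullback attractor inside an upper box $K_2(z(s))$ strictly above the unstable path via Lemma~\ref{K1K2}, pick $z(s)$ using the positive eigenvector at $p(s)$, reuse the reparametrization $\sigma$ from \eqref{sigmaFunc} for statement 1, and reuse the Lemma~\ref{beginning} plus Mean Value Theorem estimate from Theorem~\ref{highDTheorem}(2) for statement 2. You also correctly identify the one genuine technical obstacle—the boxes $K_2(z(s))$ are not nested, so it is not automatic that the trajectory remains in $\bigcup_{s \in [v,S_0]} K_2(z(s)) \times \{s\}$.

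However, the two fixes you propose (build a monotone-nested $z(s)$, or thicken to a single parameter-robust enveloping box) are not what is needed and are not obviously achievable in general, since $p(s)$ itself may move non-monotonically. The paper's resolution is simpler and uses a resource you already have in hand: the smallness of $r$. Fix a continuous (say $C^1$) choice of $z(s)$ on the compact interval $[v,S_0]$. By the computation in Lemma~\ref{K1K2}, for any boundary point $y$ of $K_2(z(s))$ with $y_i = z_i(s)$ one has $f_i(y,\Lambda(s)) \ge f_i(z(s),\Lambda(s))$, and $\min_i f_i(z(s),\Lambda(s))$ is bounded below by a positive constant uniformly on $[v,S_0]$. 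Meanwhile, in the augmented $(x,s)$ space the lateral boundary of $\bigcup_{s\in[v,S_0]} K_2(z(s)) \times \{s\}$ drifts at speed $r\,|z'(s)|$, which is bounded on $[v,S_0]$. So for all sufficiently small $r$ the inward component of $f$ dominates the drift and the union is forward invariant, with no need for nesting. Since you are already shrinking $r$ to make the pullback attractor shadow $q(s)$ up to $s = u$ (Lemma 2.3 of \cite{sebas}), taking $r < \min\{r_0,r_1\}$ reconciles both constraints. You should also record explicitly, as the paper does, that $z(S_0)$ is chosen close enough to $p_+$ that $q_+ \notin K_2(z(S_0))$ (possible because $q_+ \ll p_+$), and that $K_2(z(S_0)) \times [S_0,\infty)$ is forward invariant for all $s \ge S_0$ once $\Lambda(s)$ is close enough to $\lambda_+$; this is what actually bounds the trajectory away from $q_+$ for all large $t$.
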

\begin{proof}
We will prove statement 1. The proof of statement 2 is similar but does not require any reparametrization. Suppose $q(s) \ll p(s)$ for all $s \in \R$ including in the limits and that $q(u) \gg p(v)$. Let $x^r(t)$ denote the pullback attractor to $(q_-,\lambda_-)$. \par

Pick $\epsilon > 0$ such that $\overline{B_\epsilon(q(u))} \gg \{p(v)\}$. By Lemma 2.3 of \cite{sebas}, there is an $r_0 > 0$ such that for all $0 < r < r_0$, $|x^r(s/r) - q(s)| < \epsilon/2$ for all $s \in \R$. \par

For any $s \in \R$, since $Df(p(s))$ has a positive real eigenvalue with an eigenvector that has all positive components, by the Invariant Manifold Theorem there is a point $z(s)$ such that $\{z(s) \cdot_{\Lambda(s)} t\}_{t \le 0} \gg \{p(s)\}$ and $f_i(z(s),\Lambda(s)) > 0$ for all $i$. Then define $K_2(z(s)) = \{x \in U: x \ge z(s)\}$ as in Lemma \ref{K1K2}. The flow of \eqref{autonomous} with $\lambda = \Lambda(s)$ is pointing in on all sides along the boundary of $K_2(z(s))$. See Figure \ref{fig:monotone} for an illustration. \par

Because $z(s)$ can be chosen to be arbitrarily close to $p(s)$, let us also say that $z(v)$ satisfies $\{z(v)\} \ll \overline{B_{\epsilon}(q(u))}$ and that $z(s)$ varies continuously in $s$. \par

Because the system converges as $s \to \infty$, there is an $S_0 > v$ such that the flow of the autonomous system \eqref{autonomous} with $\lambda = \Lambda(s)$ points in along the boundary of $K_2(z(S_0))$ for every $s \ge S_0$ and $q_+ \not \in K_2(z(S_0))$. Then $K_2(z(S_0)) \times [S_0,\infty)$ is forward invariant under the flow of \eqref{augmented} for any $r > 0$. Additionally, we can choose $r_1 > 0$ sufficiently small so that
$$\bigcup_{s \in [v,S_0]} K_2(z(s)) \times \{s\}$$
is forward invariant under the flow of \eqref{augmented}. Now fix $r \in (0,\min\{r_0,r_1\})$. \par

As in the proof of Theorem \ref{highDTheorem}, we will construct a reparametrization
$$\tilde \Lambda(s) := \Lambda(\sigma(s))$$
using a smooth monotonic increasing $\sigma \in C^2(\R,\R)$ that increases rapidly from $\sigma(s) = u$ to $\sigma(s) = v$ but increases slowly otherwise. In particular, for any $M, \eta > 0$ we choose a smooth monotonic function $\sigma(s)$ that satisfies \eqref{sigmaFunc}. \par

Let $x^{[r,\tilde \Lambda]}(t)$ denote the pullback attractor to $(X_-,\lambda_-)$ with parameter change $\tilde \Lambda$. By construction, we know that $x^{[r,\tilde \Lambda]} (u/r) \in B_{\epsilon/2}(q(u))$. By choosing $M >0$ sufficiently large and $\eta > 0$ sufficiently small, we can guarantee that $x^{[r,\tilde \Lambda]} (v/r) \in B_\epsilon(q(u)) \subset K_2(z(v))$. This implies that $x^{[r,\tilde \Lambda]} (t) \in K_2(z(S_0))$ for all sufficiently large $t$ and therefore $x^{[r,\tilde \Lambda]} (t) \not \to q_+$ as $t \to \infty$.
\ignore{
\vspace{.1in}

Now we will prove statement 2. Assume $q(s) \ll p(s)$ for all $s \in \R$ including in the limits and that $q_- \gg p_+$. \par

Pick $\epsilon > 0$ such that $B_{3\epsilon}(q_-) \gg \{p_+\}$. By Lemma \ref{beginning}, there is an $S_1 > 0$ such that the pullback attractor $x^r(t)$ to $q_-$ satisfies $x^r(t) \in B_\epsilon(q_-)$ if $rt < -S_1$. \par

Since $Df(p_+)$ has a positive real eigenvalue with with an eigenvector that has all positive components, by the Invariant Manifold Theorem there is a point $z$ such that $\{z \cdot_{\lambda_+} t\}_{t \le 0} \gg \{p_+\}$, $\{z\} \ll \overline{B_{2\epsilon}(q_-)}$, and $f_i(z,\lambda_+) > 0$ for all $i$. Then define $K_2(z) = \{x \in U: x \ge z\}$ as in Lemma \ref{K1K2}. \par

Pick $S_2 > 0$ such that $f_i(z, \Lambda(s)) > 0$ for all $s \ge S_2$. Then Lemma \ref{K1K2} implies that the flow of \eqref{autonomous} for $\lambda = \Lambda(s)$ with $s \ge S_2$ is pointing into $K_2(z)$ on the boundary of $K_2(z)$. Hence, $K_2(z) \times [S_2,\infty)$ is a forward invariant set under the flow of \eqref{augmented}. This implies that if $x^r(t) \in \overline{B_{2\epsilon}(q_-)} \gg \{p_+\}$ for $rt > S_2$ then $x^r(t) \gg p_+$ for all $rt \ge S_2$. \par

Take $S = \max\{S_1,S_2\}$. Then $r > 0$ can be chosen large enough so that $x^r(-S/r) \in B_\epsilon(q_-)$ implies that $x^r(S/r) \in B_{2\epsilon}(q_-)$, as in the proof of Theorem \ref{highDTheorem}. For these values of $r>0$, $x^r(t) \not \to q_+$ as $t \to \infty$. Therefore, there is R-tipping away from $q_-$ for large enough $r > 0$.}
\end{proof}

One of the benefits of Proposition \ref{Chris} is that it is not necessary to know everything about the dynamics of the autonomous system. We made no attempt to show exactly what the end behavior of the pullback attractor $x^r(t)$ to $q_-$ is; we simply needed to show that it is bounded away from $q_+$. Much of our work in proving Theorem \ref{highDTheorem} was to describe the end behavior of the pullback attractor; all that here was unnecessary because of the special monotone structure of the system. \par

Now we will see how the idea of forward inflowing stability can be applied to monotone systems to show that there will not be rate-induced tipping. As we have already seen, in multi-dimensional systems there are many different ``directions'' in which a trajectory can tip. It will be useful for the next result if we narrow our focus from tipping in general to tipping in a particular direction. We make the following definition:

\begin{defn}
Let $(q(s),\Lambda(s))$ be a stable path in system \eqref{augmented}, and let $x^r(t)$ denote the pullback attractor to $q_-$. Let $L \subset U$ be closed and $q_+ \not \in L$. We say that $x^r(t)$ {\em tips to L} if there is some $r > 0$ and $T > 0$ such that $x^r(t) \in L$ for all $t \ge T$.
\end{defn}

\begin{prop}\label{Chris2}
Suppose we have a system of the form \eqref{augmented} where $\dot x = f(x,\lambda)$ is a monotone system for each $\lambda \in [\lambda_-,\lambda_+]$. Let $(p(s),\Lambda(s))$ be a path and $(q(s),\Lambda(s))$ be a stable path; denote $p_{\pm} = \lim_{s \to \pm \infty} p(s)$ and $q_{\pm} = \lim_{s \to \pm \infty} q(s)$. Suppose for all $s \in \R$ (including in the limits) $Df(p(s))$ has a positive eigenvalue whose associated eigenvector has all positive components. If
$$q(s_1) \ll p(s_2) \text{ (resp. $q(s_1) \gg p(s_2)$)}$$
for all $s_1 \le s_2$ (including in the limits as $s_1 \to -\infty$ and $s_2 \to \infty$) then there is no R-tipping away from $q_-$ to $\{x: x \ge p_+\}$ (resp. to $\{x: x \le p_+\}$).
\end{prop}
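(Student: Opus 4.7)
The plan is to adapt the forward‑inflowing‑stability argument of Proposition \ref{inflowing} to the monotone setting by constructing a family of lower sets $K_1(b(s)) = \{x : x \le b(s)\}$ that form a forward invariant ``shield'' around $(q(s),\Lambda(s))$ in the augmented phase space and whose upper limit stays strictly below $p_+$ in the componentwise partial order. For each $s \in \R \cup \{\pm\infty\}$, the hypothesis that $Df(p(s))$ has a positive eigenvalue $\mu(s)$ with an all-positive eigenvector $v(s)$ lets me set $b(s) = p(s) - \epsilon(s)\,v(s)$ with $\epsilon(s) > 0$ small; Taylor expansion then gives $f(b(s),\Lambda(s)) = -\epsilon(s)\mu(s)v(s) + O(\epsilon(s)^2) \ll 0$ componentwise, and of course $b(s) \ll p(s)$. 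The hypothesis $q(s_1) \ll p(s_2)$ for $s_1 \le s_2$, together with compactness of the extended line $\R \cup \{\pm\infty\}$ under the continuous extensions of $q$, $p$, $v$, $\Lambda$, lets me choose $\epsilon > 0$ uniform so that in addition $q(s) \ll b(s)$ for every $s$. Lemma \ref{K1K2} (applied to $K_1$) then guarantees that at any $y \in \partial K_1(b(s))$ with $y_i = b_i(s)$, the line-integral argument symmetric to the one given there yields $f_i(y,\Lambda(s)) \le f_i(b(s),\Lambda(s)) < 0$, so the frozen flow of $\dot x = f(x,\Lambda(s))$ points strictly into $K_1(b(s))$.

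The main obstacle is upgrading this frozen invariance to invariance of $\Omega := \{(x,s) : x \le b(s)\}$ under the augmented flow \eqref{augmented} for \emph{every} $r > 0$. At $(y,s) \in \partial\Omega$ with $y_i = b_i(s)$, staying inside $\Omega$ requires $\dot x_i \le r b_i'(s)$; since $\dot x_i \le f_i(b(s),\Lambda(s)) < 0$ and $r > 0$ is arbitrary, it suffices to take $b(s)$ componentwise nondecreasing in $s$. The naive formula $b(s) = p(s) - \epsilon v(s)$ need not be nondecreasing because $p(s)$ itself need not be. I would overcome this by passing to a suitable nondecreasing modification---for instance first forming the nondecreasing envelope $\bar p_i(s) := \inf_{s' \ge s} p_i(s')$, which still satisfies $q(s) \ll \bar p(s)$ by the strong hypothesis, and then perturbing in the $-v(s)$ direction and shrinking $\epsilon(s)$ near points of non-monotonicity, so as to recover $f(b(s),\Lambda(s)) \ll 0$ and $q(s) \ll b(s)$ while preserving monotonicity in $s$. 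The hypothesis $q(s_1) \ll p(s_2)$ for $s_1 \le s_2$ is essential here because it is precisely what guarantees a uniform gap between the two paths into which such a monotone interpolation can be fitted.

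With such a $b(s)$ in hand, $\Omega$ is forward invariant under \eqref{augmented} for every $r > 0$. Lemma \ref{beginning} places the pullback attractor $x^r(t)$ arbitrarily close to $q_-$ for $t$ sufficiently negative; since $q_- \ll b_- := \lim_{s \to -\infty} b(s)$, this forces $x^r(t) \in K_1(b(rt))$ for such $t$, and forward invariance of $\Omega$ then propagates this to $x^r(t) \le b(rt)$ for every $t \in \R$. Because $b(rt) \to b_+ = p_+ - \epsilon v_+ \ll p_+$ as $t \to \infty$, there is a $T$ with $b_i(rt) < (p_+)_i$ for every $i$ and all $t \ge T$, so $x^r(t) \not\ge p_+$ for all sufficiently large $t$ and $x^r(t)$ cannot tip to $\{x : x \ge p_+\}$. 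The parenthetical ``resp.'' case is entirely symmetric: use upper shields $K_2(a(s))$ with $a(s) = p(s) + \epsilon(s)v(s)$ and a nonincreasing modification in $s$, relying on the reversed hypothesis $q(s_1) \gg p(s_2)$ for $s_1 \le s_2$ to provide the corresponding gap.
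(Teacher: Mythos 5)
Your proposal is correct and follows essentially the same approach as the paper: build a family of lower sets $K(s)=\{x\le z(s)\}$ with $q(s)\ll z(s)\ll p(s)$, $f(z(s),\Lambda(s))\ll 0$, and $z$ nondecreasing in $s$, so the union over $s$ is forward invariant under the augmented flow for every $r>0$, trap the pullback attractor inside, and conclude it stays $\ll p_+$. The only cosmetic difference is that you obtain the ``shield'' point $b(s)$ by a Taylor expansion of $f$ at $p(s)$ in the $-v(s)$ direction, whereas the paper takes $z(s)$ on the one-dimensional unstable manifold of $p(s)$ associated with the positive eigenvalue; both proofs leave the monotone reselection of $z(s)$ (resp. $b(s)$) in $s$ at the same level of detail.
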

\begin{proof}
We will assume that $q(s_1) \ll p(s_2)$ for all $s_1 \le s_2$ and prove the corresponding result. The proof of the other result is similar. \par

Because each $Df(p(s))$ has a positive real eigenvalue whose associated eigenvector has all positive components, by the Invariant Manifold Theorem there is a $z(s) \ll p(s)$ such that $z(s) \cdot_{\Lambda(s)} t \to p(s)$ as $t \to -\infty$ and $f_i(z(s),\Lambda(s)) < 0$ for all $i$. By changing $z(s)$ if necessary, we also can guarantee that $q(s) \ll z(s) \ll p(s)$ for all $s \in \R$, including in the limits and that $z(s_1) \le z(s_2)$ for all $s_1 \le s_2$. \par

Now define $K(s) = \{x \in U: x \le z(s)\}$ for all $s$, including in the limits. Then the $\{K(s)\}$ satisfy all the conditions in Definition \ref{inflowingstable} except they are not compact, and we do not know that $K_+ \subset \B(q_+,\lambda_+)$. Nevertheless, arguments like those in Proposition \ref{inflowing} show that the pullback attractor $x^r(t)$ to $q_-$ must satisfy $x^r(t) \in K_+$ for all $t \in \R$. Now $K_+ \ll \{x: x \ge p_+\}$, so $x^r(t)$ does not tip to $\{x: x \ge p_+\}$ for any $r > 0$.
\end{proof}

Notice that in Proposition \ref{Chris2} we cannot conclude that rate-induced tipping does not happen at all; it is possible that the parameter change in system \eqref{augmented} may cause rate-induced tipping to happen away from $q_-$ in another direction. But given a particular monotone system, one could perhaps apply Proposition \ref{Chris2} along with some other arguments to conclude that no rate-induced tipping is possible for a given parameter change.

\section{An Example}\label{exampleSection}

Here we give an example of a two-dimensional monotone system that will allow us to apply the things proven in this paper, particularly in Section \ref{monotoneSection}. For $x = (x_1,x_2) \in \R^2$, consider the system
\begin{equation}\label{nDexample}
\begin{split}
\dot x_1 & = -x_1^3 + ax_1^2 + bx_1 - a(b+1) + x_2 \\
\dot x_2 & = x_1-x_2
\end{split}
\end{equation}
for any $b > -1$ and $a$ satisfying $|a| <\sqrt{b+1}$. Let
$$F(x) = \begin{pmatrix}
F_1(x) \\ F_2(x)
\end{pmatrix}
= \begin{pmatrix}
-x_1^3 + ax_1^2 + bx_1 - a(b+1) + x_2 \\ x_1-x_2
\end{pmatrix}$$
denote the vector field generated by \eqref{nDexample}. The fixed points of \eqref{nDexample} are $p_1 = \left( -\sqrt{b + 1}, -\sqrt{b + 1} \right)$, $p_2 = (a,a)$, and $p_3 = \left( \sqrt{b + 1}, \sqrt{b + 1} \right)$. The derivative matrix at a point $x = (x_1,x_2)$ is
\begin{align}\label{alphaMat}
DF(x) = \begin{pmatrix}
\alpha(x) & 1 \\
1 & -1
\end{pmatrix}
\end{align}
where $\alpha(x) = -3x_1^2 + 2ax_1 + b$. This shows that \eqref{nDexample} is monotone. If $\alpha(x) < -1$, then $\eqref{alphaMat}$ has two negative eigenvalues, and if $\alpha(x) > -1$, then $\eqref{alphaMat}$ has one positive and one negative eigenvalue. In our parameter regime, $\alpha\left( -\sqrt{b + 1}, -\sqrt{b + 1} \right), \alpha\left( \sqrt{b + 1}, \sqrt{b + 1} \right) < -1$, so $p_1$ and $p_3$ are attracting equilibria, whereas $\alpha(a,a) > -1$, so $p_2$ is a saddle node. An eigenvector associated with the positive eigenvalue $\lambda_+$ of $DF(p_2)$ is
$$\begin{pmatrix}
\lambda_+ + 1 \\ 1
\end{pmatrix},$$
which points in the all-positive direction. \par

Now, let us consider the possibility of rate-induced tipping in \eqref{nDexample}. To do this, we let $a$ and $b$ depend on a parameter that can vary with time:

\begin{equation}\label{nDrtip}
\begin{split}
\dot x_1 & = -x_1^3 + a(\Lambda(s)) x_1^2 + b(\Lambda(s))x_1 - a(\Lambda(s))(b(\Lambda(s))+ 1) + x_2 \\
\dot x_2 & = x_1 - x_2 \\
\dot s & = r,
\end{split}
\end{equation}
where $\Lambda: \R \to (\lambda_-,\lambda_+)$ for some $\lambda_- < \lambda_+$ satisfies \eqref{star} and $a,b: [\lambda_-,\lambda_+] \to \R$ are smooth functions satisfying $b(\lambda) > -1$ and $|a(\lambda)| < \sqrt{b(\lambda) + 1}$ for all $\lambda \in [\lambda_-,\lambda_+]$. We will use the notation
$$F(x,\lambda) = \begin{pmatrix}
F_1(x,\lambda) \\ F_2(x,\lambda)
\end{pmatrix}
= \begin{pmatrix}
-x_1^3 + a(\lambda) x_1^2 + b(\lambda)x_1 - a(\lambda)(b(\lambda)+ 1) + x_2 \\ x_1 - x_2
\end{pmatrix}$$
to denote the first two components of the vector field of \eqref{nDrtip}. There are two stable paths in this augmented system: $(p_1(s),\Lambda(s))$ and $(p_3(s),\Lambda(s))$. The path $(p_2(s),\Lambda(s))$ is unstable. For ease of notation, we will define
$$\lim_{s \to \pm \infty} p_i(s) = p_{i\pm}.$$

We have the following result about the possibility of R-tipping in system \eqref{nDrtip}:

\begin{prop}\label{exhaustive}
\begin{enumerate}
\item If there exist $s_1 < s_2$ such that
$$-\sqrt{b(\Lambda(s_1))+1} > a(\Lambda(s_2)) \ (\text{resp. }\sqrt{b(\Lambda(s_1))+1} < a(\Lambda(s_2)))$$
then there can be R-tipping away from $p_{1-}$ (resp. $p_{3-}$).

\item If, for all $s_1 < s_2$ (including in the limit as $s_1 \to -\infty$ or $s_2 \to \infty$),
$$-\sqrt{b(\Lambda(s_1))+1} < a(\Lambda(s_2)) \ (\text{resp. } \sqrt{b(\Lambda(s_1))+1} > a(\Lambda(s_2)))$$
then $(p_1(s),\Lambda(s))$ (resp. $(p_3(s),\Lambda(s))$) is forward inflowing stable and there can be no R-tipping away from $(p_{1-},\lambda_-)$ (resp. $(p_{3-},\lambda_-)$).
\end{enumerate}
\end{prop}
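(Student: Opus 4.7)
The proof naturally splits along the two parts, and I would handle them in order. Part 1 is a direct corollary of Proposition \ref{Chris}: the constraint $|a(\lambda)|<\sqrt{b(\lambda)+1}$ forces $p_1(s)\ll p_2(s)\ll p_3(s)$ for every $s\in\R$ (including the limits), because all three equilibria lie on the diagonal of $\R^2$. The text has already verified that $DF(p_2(s),\Lambda(s))$ has a positive real eigenvalue whose eigenvector has all-positive components, so Proposition \ref{Chris}(1) applies with $q=p_1$, $p=p_2$, $u=s_1$, $v=s_2$: the hypothesis $-\sqrt{b(\Lambda(s_1))+1}>a(\Lambda(s_2))$ is precisely $p_1(s_1)\gg p_2(s_2)$. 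The $p_3$-case is the symmetric ``resp.'' application with $q=p_3$ and the condition $q(u)\ll p(v)$.

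For Part 2 I focus on the $p_1$-case; the $p_3$-case is symmetric. The plan is to construct a family of compact sets $K(s)$ verifying Definition \ref{inflowingstable} and then invoke Proposition \ref{inflowing}. I would take $K(s)$ to be an order interval $[w(s),z(s)]$ built as follows. The upper corner $z(s)$ is chosen slightly down the unstable manifold of $p_2(s)$, as in the proof of Proposition \ref{Chris2}; this gives $p_1(s)\ll z(s)\ll p_2(s)$ with $F_i(z(s),\Lambda(s))<0$ for every $i$. For the lower corner, observe that $DF(p_1(s),\Lambda(s))$ is a cooperative matrix with two negative eigenvalues, so by Perron--Frobenius its dominant (least negative) eigenvalue has an all-positive eigenvector $v(s)$; then $w(s):=p_1(s)-\delta v(s)$ for small $\delta>0$ satisfies $w(s)\ll p_1(s)$ with $F_i(w(s),\Lambda(s))>0$ for every $i$. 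Lemma \ref{K1K2} then guarantees that the autonomous flow at $\lambda=\Lambda(s)$ points into $K(s)$ along the whole boundary.

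The main obstacle is simultaneously enforcing the nesting $K(s_1)\subset K(s_2)$ and the conditions $p_{1\pm}\in\Int K_\pm$ and $K_+\subset\B(p_{1+},\lambda_+)$ compact. The hypothesis $-\sqrt{b(\Lambda(s_1))+1}<a(\Lambda(s_2))$ for all $s_1\le s_2$ (including limits) is exactly what supplies the required room: it says $p_1(s_1)\ll p_2(s_2)$ whenever $s_1\le s_2$, so after a preliminary continuous choice of $z(s)$ I can replace it by its componentwise supremum over $s'\le s$ and perturb slightly back toward $p_2(s)$ to restore $F_i<0$; monotonicity follows, and the hypothesis keeps $z(s)\ll p_2(s')$ for every $s'\ge s$. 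For $w$ I would just take a single point far below $\inf_s p_1(s)$ chosen so that $F(w,\Lambda(s))\gg 0$ for every $s$, which is possible because the cubic $-x_1^3$ dominates $F_1$ as $x_1\to-\infty$ and $F_2=x_1-x_2$ is positive whenever $x_2<x_1$. The limit $K_+=[w_+,z_+]$ is then a compact forward-invariant order interval whose only equilibrium at $\lambda=\lambda_+$ is $p_{1+}$, since $z_+\ll p_{2+}\ll p_{3+}$ excludes the other two. Because planar cooperative systems admit no nontrivial periodic orbits (Hirsch), every orbit in $K_+$ must converge to an equilibrium, necessarily $p_{1+}$; hence $K_+\subset\B(p_{1+},\lambda_+)$. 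Proposition \ref{inflowing} then forbids R-tipping away from $p_{1-}$.
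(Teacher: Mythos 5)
Your proof is correct and essentially mirrors the paper's: Part 1 invokes Proposition \ref{Chris}, and Part 2 constructs a compact forward-inflowing order interval about $p_1(s)$ (lower corner controlled by the cubic growth of $F_1$, upper corner taken slightly down the unstable manifold of $p_2$ as in Proposition \ref{Chris2}) and then places $K_+$ inside $\B(p_{1+},\lambda_+)$ using the convergence theorem for planar cooperative systems before applying Proposition \ref{inflowing}. The paper's version is a bit leaner---it takes a single $s$-independent box $K(s)\equiv\{x: p\le x\le z\}$, with $p=(c,d)$ built directly from the cubic inequality and $z$ borrowed from the proof of Proposition \ref{Chris2}---so your Perron--Frobenius construction of $w(s)$ and the componentwise-supremum adjustment of $z(s)$ are more machinery than the argument strictly needs.
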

\begin{remark}
Notice that Proposition \ref{exhaustive} gives a nearly exhaustive description of whether a given parameter change will lead to R-tipping or not for \eqref{nDrtip}. The only cases left out are boundary cases when, for instance, $\sqrt{b(\Lambda(s_1)) + 1}$ is equal to, but never greater than, $a(\Lambda(s_2))$ for some $s_1 < s_2$.
\end{remark}
\begin{proof}
Statement 1 is a consequence of Proposition \ref{Chris}. The proof of statement 2 requires more work. We will show that there can be no R-tipping away from $(p_{1-},\lambda_-)$ if
$$-\sqrt{b(\Lambda(s_1))+1} < a(\Lambda(s_2))$$
for all $s_1 < s_2$. The proof of the corresponding statement is similar. \par

Let $r > 0$ and let $x^r(t)$ be the pullback attractor to $p_{1-}$. By Proposition \ref{Chris2}, $x^r(t)$ does not tip to $\{x: x \ge p_{2-}\}$, but in fact the proof of Proposition \ref{Chris2} shows something stronger: there is a $z$ satisfying $p_{1+} \ll z \ll p_{2+}$ such that $x^r(t) \in \{x: x \le z\}$ for all $t$. \par

Choose $c < \inf_{s \in \R}\left\{-\sqrt{b(\Lambda(s))+1}\right\}$. Then
$$0 < -c^3 + a(\Lambda(s)) c^2 + (b(\Lambda(s))+1) c - a(\Lambda(s))(b(\Lambda(s)) + 1)$$
for all values of $s$ (including in the limits as $s \to \pm \infty$), and therefore
$$c^3 - a(\Lambda(s)) c^2 - b(\Lambda(s)) c + a(\Lambda(s))(b(\Lambda(s)) + 1) < c.$$
Choose any $d$ satisfying
$$c^3 - a(\Lambda(s)) c^2 - b(\Lambda(s)) c + a(\Lambda(s))(b(\Lambda(s)) + 1)< d < c$$
for all values of $s$ (including in the limits as $s \to \pm \infty$) and set $p = (c, d)$. Then $p \ll p_1(s)$ for all $s$, and $F_i(p,\lambda) > 0$ for all $i$ and $\lambda \in [\lambda_-,\lambda_+]$. If we set $K(s) \equiv \{x : p \le x \le z\}$, then $\{K(s)\}$ clearly satisfies the first 4 conditions of Definition \ref{inflowingstable} to show that $(p_1(s),\Lambda(s))$ is forward inflowing stable. The only thing that remains to be shown is that $K_+ \subset \B(p_{1+},\lambda_+)$.

What we are going to show is that $K_+ = \{x: p \le x \le z\}$ is in the basin of attraction of $p_{1+}$ under the flow of \eqref{nDexample} when $\lambda = \lambda_+$.  Fix some $x \in K_+$. As shown above, $K_+$ is a forward invariant box, so $\{x \cdot t\}_{t \ge 0}$ stays in $K_+$ for all time. Hence, $\omega(x)$ is nonempty and compact. By Theorem 3.22 in Chapter 4 of \cite{monotone}, $\omega(x)$ is a fixed point, and therefore must be $p_1$. Because $x$ was an arbitrary point in $K_+$, $K_+$ is in the basin of attraction of $p_{1+}$. \par

Therefore, $(p_1(s),\Lambda(s))$ is FIS, and there cannot be R-tipping away from $(p_{1-},\lambda_-)$.
\end{proof}

Let us look at a couple of specific examples to illustrate Proposition \ref{exhaustive}.

\begin{ex}
\end{ex}
Let $\Lambda(s) = \frac{1}{2} (1 + \tanh(s))$. Then $\lambda_- = 0$ and $\lambda_+ = 1$. We define the dependence of $a$ and $b$ on the parameter $\lambda$ to be $a(\lambda) = 2\lambda$ and $b(\lambda) = 8\lambda$. This means that $b(\Lambda(s)) > -1$ and $a(\Lambda(s)) < \sqrt{b(\Lambda(s)) + 1}$ for all $s$. However,
$$\sqrt{b(\Lambda(-5)) + 1} < a(\Lambda(5)),$$
which implies by Proposition \ref{exhaustive} there can be R-tipping away from $p_{3-}$. See Figure \ref{ex:tips}. Note that because $\Lambda$ is a monotone function of $s$, we can plot the positions of the trajectory and the quasi-stable (unstable) equilibria against $\lambda = \Lambda(s)$ rather than $s$. This is convenient because the range of $s$ is infinite, but the range of $\Lambda$ is bounded.

\begin{ex}
\end{ex}
Once again, let $\Lambda(s) = \frac{1}{2} (1 + \tanh(s))$, but this time define $a(\lambda) =\frac{1}{2} \lambda$ and $b(\lambda) = \lambda$. Then $b(\Lambda(s)) > -1$ and $a(\Lambda(s)) < \sqrt{b(\Lambda(s)) + 1}$ for all $s$. Furthermore,
$$-\sqrt{b(\Lambda(s)) + 1} < -1 < 0 < a(\Lambda(s)) < \frac{1}{2} < 1 < \sqrt{b(\Lambda(s)) + 1}$$
for all $s$, so by Proposition \ref{exhaustive} there can be no R-tipping away from either $p_{1-}$ or $p_{3-}$. See Figure \ref{ex:nottips}.

\begin{figure}
	\centering
	\begin{subfigure}[b]{0.45\textwidth}
		\includegraphics[width = \textwidth]{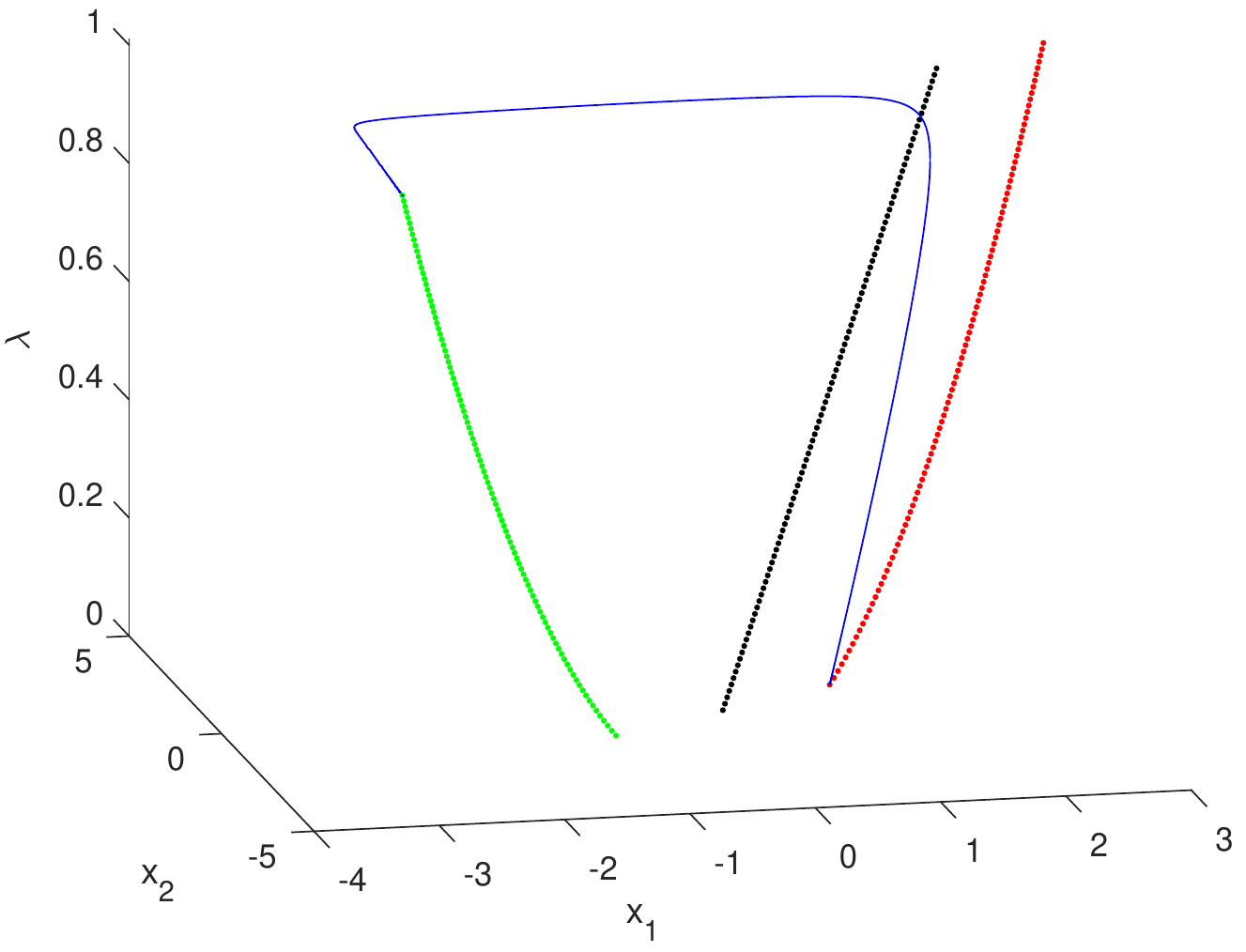}
		\caption{}
		\label{ex:tips}
	\end{subfigure}
	\begin{subfigure}[b]{0.45\textwidth}
		\includegraphics[width = \textwidth]{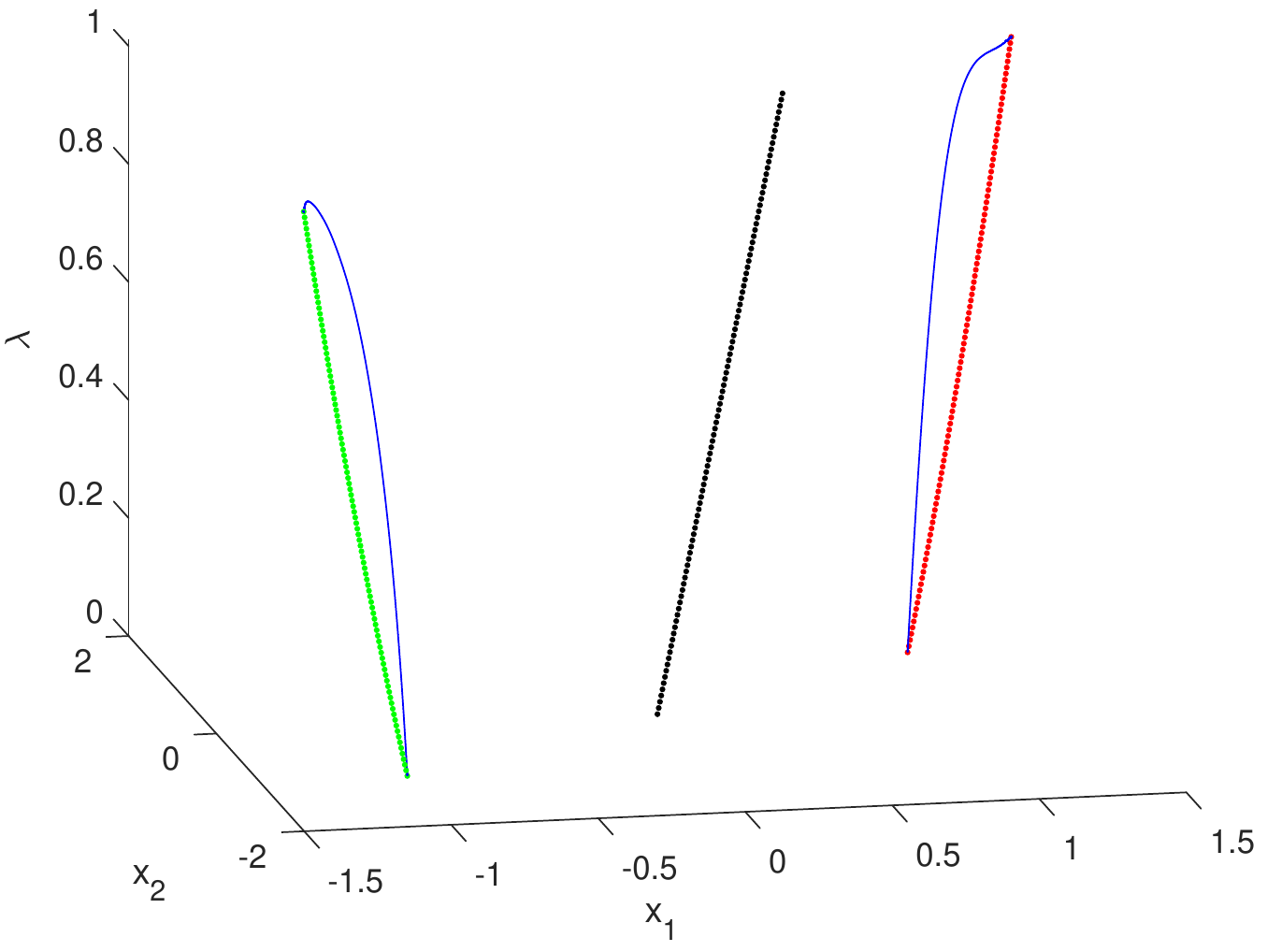}
		\caption{}
		\label{ex:nottips}
	\end{subfigure}
	\caption{In both pictures, the green, black, and red dots represent the positions of $p_1(s)$, $p_2(s)$, and $p_3(s)$, respectively, for different values of $\lambda = \Lambda(s)$. The blue curve is the pullback attractor to $p_{3-}$. In (a), the position of $\sqrt{b(\Lambda(s))+1}$ for small $s$-values is smaller than the position of $a(\Lambda(s))$ for larger $s$-values. Therefore, R-tipping is possible away from $p_{3-}$. In (b), $-\sqrt{b(\Lambda(s_1))+1} < a(\Lambda(s_2)) < \sqrt{b(\Lambda(s_1))+1}$ for all $s_1 < s_2$ (including in the infinite limits), so R-tipping is not possible away from either $p_{1-}$ or $p_{3-}$. Pullback attractors are shown in blue for both paths.}
	\label{fig:nDtipping}
\end{figure}

\section{Conclusion}\label{conclusion}

In summary, we have shown that R-tipping results are more complicated in multi-dimensional systems than in one-dimensional systems. R-tipping can happen anytime a path is not forward basin stable of a certain type, and sometimes there can be R-tipping even if a path is FBS. We proposed forward inflowing stability as a condition that prevents R-tipping in systems of all dimensions. One drawback is that it is difficult to know when a path is FIS because it requires knowledge about the autonomous systems with fixed parameter values and what sort of forward invariant sets exist around the equilibria. One future direction we could take is to give more concrete results about how to determine whether a path is FIS. \par

In this paper, we focused on stable paths of equilibria. However, in multi-dimensional systems, there could be stable paths of other attracting invariant sets, such as periodic orbits. In such a situation, different kinds of R-tipping are possible, known as partial tipping and full tipping (see \cite{hassan}). Not much work has been done to determine when these kinds of tipping can or cannot happen, but the FBS and FIS methods could be used as a starting point, as we believe the results given in this paper are generalizable to invariant sets other than fixed points.

\section*{Acknowledgements}

The authors acknowledge support for this research from the Office of Naval Research under grant N00014-18-1-2204.

\end{document}